\theoremstyle{plain}
\newtheorem{theorem}{Theorem}%[section]
\newtheorem{lemma}[theorem]{Lemma}
\newtheorem{corollary}[theorem]{Corollary}
\newtheorem{prop}[theorem]{Proposition}
\theoremstyle{remark}
\newcommand{\R}{\mathbb{R}}
\newcommand{\EE}{\mathbb{E}}
\newcommand{\E}{\mathbb{E}}
\newcommand{\PP}{\mathbb{P}}
\renewcommand{\>}{\rangle}
\newcommand{\<}{\langle}
\newcommand{\III}[1]{{\left\vert\kern-0.25ex\left\vert\kern-0.25ex\left\vert #1 
    \right\vert\kern-0.25ex\right\vert\kern-0.25ex\right\vert}}
\newcommand{\simiid}{\stackrel{\text{iid}}{\sim}}
\newcommand{\ca}{\mathcal{A}} \newcommand{\cb}{\mathcal{B}}  \newcommand{\cd}{\mathcal{D}_+^{p\times p}}  \newcommand{\cf}{\mathcal{F}}        \newcommand{\cn}{\mathcal{N}}        \newcommand{\cw}{\mathcal{W}}   
\newcommand{\eps}{\varepsilon}
\providecommand*{\diff}%
	{\@ifnextchar^{\DIfF}{\DIfF^{}}}
\def\DIfF^#1{%
	\mathop{\mathrm{\mathstrut d}}%
		\nolimits^{#1}\gobblespace}
\def\gobblespace{%
	\futurelet\diffarg\opspace}
\def\opspace{%
	\let\DiffSpace\!%
	\ifx\diffarg(%
		\let\DiffSpace\relax
	\else
		\ifx\diffarg[%
			\let\DiffSpace\relax
	\else
		\ifx\diffarg\{%
			\let\DiffSpace\relax
		\fi\fi\fi\DiffSpace}
\newcommand{\TT}{\mathsf{T}} 
\newcommand{\acov}{\Sigma}
\newcommand{\aprec}{\Theta}
\newcommand{\acor}{\Gamma}
\newcommand{\apcor}{\Omega}
\newcommand{\truecov}{\acov^*} 
\newcommand{\trueprec}{\aprec^*} 
\newcommand{\truecor}{\acor^*} 
\newcommand{\truepcor}{\apcor^*} 
\newcommand{\estcov}{\widehat\acov}
\newcommand{\estprec}{\widehat\aprec}
\newcommand{\estpcor}{\widehat\apcor}
\def\argmin{\mathop{\rm argmin}}
\def\argmax{\mathop{\rm argmax}}
\newcommand{\psd}{{\mathcal{S}_{\succeq 0}^{p\times p}}}
\newcommand{\M}{{\mathcal{M}^{p\times p}}}
\newcommand{\mtptwo}{\text{MTP}_2}
\newcommand{\Ldiv}{L^{\text{ssym}}}
\newcommand{\vertiii}[1]{{\vert\kern-0.25ex\vert\kern-0.25ex\vert #1 
    \vert\kern-0.25ex\vert\kern-0.25ex\vert}}
\begin{document}

\begin{frontmatter}
\title{Covariance Estimation with\\Nonnegative Partial Correlations}
%\title{A sample article title with some additional note\thanksref{t1}}
\runauthor{Soloff, Guntuboyina \& Jordan}
\runtitle{Cov. Est. with Nonnegative Partial Correlations}
%\thankstext{t1}{A sample additional note to the title.}

\begin{aug}
%%%%%%%%%%%%%%%%%%%%%%%%%%%%%%%%%%%%%%%%%%%%%%
%%Only one address is permitted per author. %%
%%Only division, organization and e-mail is %%
%%included in the address.                  %%
%%Additional information can be included in %%
%%the Acknowledgments section if necessary. %%
%%%%%%%%%%%%%%%%%%%%%%%%%%%%%%%%%%%%%%%%%%%%%%
\author[A]{\fnms{Jake A.} \snm{Soloff}\ead[label=e1]{soloff@berkeley.edu}\thanksref{t1}},
\author[A]{\fnms{Adityanand} \snm{Guntuboyina}\ead[label=e2]{aditya@stat.berkeley.edu}\thanksref{t2}}
\& % normally \and
\author[B]{\fnms{Michael I.} \snm{Jordan}\ead[label=e3]{jordan@cs.berkeley.edu}\thanksref{t1}}
%%%%%%%%%%%%%%%%%%%%%%%%%%%%%%%%%%%%%%%%%%%%%%
%% Addresses                                %%
%%%%%%%%%%%%%%%%%%%%%%%%%%%%%%%%%%%%%%%%%%%%%%
\address[A]{Department of Statistics, University of California, Berkeley,
\printead{e1,e2}}

\address[B]{Department of Electrical Engineering \& Computer Sciences, Department of Statistics, University of California, Berkeley,
\printead{e3}}

\thankstext{t1}{Supported by the Mathematical Data Science program of the Office of Naval Research under Grant N00014-18-1-2764}
\thankstext{t2}{Supported by NSF CAREER Grant DMS-1654589}
\end{aug}

\begin{abstract}
We study the problem of high-dimensional covariance estimation
under the constraint that the partial correlations are nonnegative. 
The sign constraints dramatically simplify estimation: the Gaussian 
maximum likelihood estimator is 
well defined with only two observations regardless of the number of variables. We analyze its performance 
in the setting where the dimension may be much larger than the sample size. We establish that the estimator is 
both high-dimensionally consistent and minimax optimal in the 
symmetrized Stein loss. We also prove a negative result which shows that the sign-constraints can introduce substantial bias for estimating the top eigenvalue of the covariance matrix. 

%In the simple setting of isotropic Gaussian observations, however, we demonstrate that the sign-constraints introduce substantial bias for estimating the top eigenvalue of the covariance matrix.
\end{abstract}

\begin{keyword}[class=MSC2010]
%\kwd{62H12, 62C20, 62F30} % 62H22 is Graphical Models under MSC2020
\kwd[Primary ]{62H12}
\kwd[; secondary ]{62C20} %62F30
\end{keyword}

% alphabetize this
\begin{keyword}
\kwd{Gaussian graphical models}
\kwd{high-dimensional statistics}
\kwd{$M$-matrix}
\kwd{precision matrix}
\kwd{random matrix theory}
\kwd{sign constraints}
\kwd{shape-constrained estimation}
\kwd{risk lower bounds for $M$-estimators}
\end{keyword}

\end{frontmatter}
%%%%%%%%%%%%%%%%%%%%%%%%%%%%%%%%%%%%%%%%%%%%%%
%% Please use \tableofcontents for articles %%
%% with 50 pages and more                   %%
%%%%%%%%%%%%%%%%%%%%%%%%%%%%%%%%%%%%%%%%%%%%%%
%\tableofcontents

\section{Introduction}\label{sec-intro}
Consider the problem of estimating a $p \times p$ covariance matrix $\truecov$ and
its inverse $\trueprec := (\truecov)^{-1}$ from an $n\times p$ data matrix $X$ whose rows are independently distributed according to the multivariate normal distribution $\cn(0, \truecov)$ with mean zero and covariance matrix $\truecov$. %with i.i.d.\ $\cn(0, \truecov)$ rows. 
The maximum likelihood
estimator (MLE) of $\trueprec$ is given by
\begin{equation}\label{fullmle}
 \tilde{\aprec} := \argmin_{\aprec \in \psd} \left\{\<\aprec, S\>
   - \log \det \aprec \right\},
\end{equation}
where $\psd$ denotes the set of all $p \times p$ symmetric, positive
semi-definite (PSD) matrices, $\<\aprec, S\> := \text{tr}(\aprec^\TT S)$ denotes the Frobenius inner product, and $S$ is the sample covariance
matrix, defined as 
\begin{equation}\label{samcova}
  S := n^{-1}X^\TT X. %\frac{1}{n}\sum_{i=1}^n X^{(i)} (X^{(i)})^T. 
\end{equation}
It is well known that $\tilde{\aprec}$ exists if and only if $S$ is
nonsingular, in which case $\tilde\aprec = S^{-1}$. 
In particular, in the high-dimensional setting where $p > n$, the MLE
does not exist, since the minimum in~\eqref{fullmle} is not finite. \citet{slawski2015estimation} observed, however, that if the
optimizer in \eqref{fullmle} is constrained to lie in the set of $p \times p$ positive semidefinite matrices with \textit{nonpositive
off-diagonal entries}, then, with probability one, the optimum is
well-defined and attained for all $n \geq 2$ regardless of the value of
$p$. Specifically, let 
\begin{equation*}
  \M := \left\{\aprec\in \psd : \aprec_{jk} \le 0 \text{ for } j\ne k\right\},
\end{equation*}
and observe that it is the convex cone of symmetric \textit{$M$-matrices}, an  important class of matrices appearing in many contexts \citep[see, e.g.,][Chap. 6]{berman1994nonnegative}. \citet{slawski2015estimation} proved that the optimizer
\begin{equation}\label{eq-log-det-div-min}
  \estprec := \argmin_{\aprec \in \M} \left\{\<\aprec, S\>
   - \log \det \aprec \right\},
\end{equation}
exists uniquely as long as, in the observed sample, no two variables
are perfectly positively correlated (i.e., $S_{jk} < \sqrt{S_{jj} S_{kk}}$ for
all $j \neq k$) and no variable is constant (i.e., $S_{jj} > 0$ for
all $j$). Both conditions hold with probability one
under the assumed Gaussian model for $n 
\geq 2$, and thus, unlike the unconstrained MLE in~\eqref{fullmle}, the
estimator~\eqref{eq-log-det-div-min} is well-defined even in the
high-dimensional regime. 

The constrained MLE $\estprec$ presents an elegant, tuning-free method for estimating
precision matrices which works for $n \ge 2$ and all values of $p$
under the assumption $\trueprec\in \M$. Efficient
algorithms for computing $\estprec$ are given in \citet{slawski2015estimation} and \citet{lauritzen2019maximum}. Note that the precision matrix
having nonpositive off-diagonal entries $\trueprec_{jk}$ is equivalent to nonnegative partial correlations $-\trueprec_{jk}/\sqrt{\trueprec_{jj}\trueprec_{kk}}$  \citep{bolviken1982probability}. Examples
of practical covariance estimation problems with nonnegative partial
correlations abound~\citep[see, e.g.,][]{lake2010discovering, slawski2015estimation, agrawal2019covariance}. More generally, \citet{karlin1983m} showed that for the normal distribution the condition that the precision matrix belongs to $\M$ is equivalent to multivariate total positivity of
order two ($\mtptwo$). $\mtptwo$ is a strong form of positive
dependence \citep{colangelo2005some} that has been widely used in
auction theory \citep{milgrom1982theory}, actuarial sciences \citep{denuit2006actuarial}, and educational evaluation and policy analysis \citep{chade2014student}.

There is growing interest in $\estprec$ in the graph signal processing literature \citep{pavez2016generalized, egilmez2017graph, pavez2018learning}, where $M$-matrices are known as {\sl Generalized Graph Laplacians} (GGL). Indeed, every graph Laplacian is a diagonally dominant $M$-matrix, and conversely every $M$-matrix $\aprec\in \M$ can be viewed as a generalized graph Laplacian, in the sense that it has a sparse {\sl edge-incidence factorization} $\aprec = VV^\TT$, where $V\in \R^{p\times p(p+1)/2}$ has at most two nonzero entries per column, whereas  positive semidefinite matrices that have other sign patterns typically require dense factorizations \citep{boman2005factor}. This connection to nonnegative weighted graphs has led to a host of other application areas in image processing and network analysis.

%A symmetric, positive semi-definite matrix with nonpositive off-diagonal matrices is called an $M$-matrix. Such matrices are well-studied in linear algebra \citep{horn1991topics, berman1994nonnegative} and arise naturally in diverse areas of applied mathematics. 
%application areas such as [\textbf{wikipedia article on M-matrices says they are applied in computational mechanics, probability theory, queuing theory, economics, control theory and computational biology}].
% \cite{plemmons1977m} gives forty equivalent characterizations of the class of nonsingular $M$-matrices.

This paper studies the statistical properties of $\estprec$ as an
estimator of the unknown precision matrix $\trueprec$ in the high-dimensional regime. Even though
$\estprec$ exists uniquely for all $n \ge 2$ regardless of the
value of $p$, rigorous results have not yet been proved for the accuracy of
$\estprec$ in the high-dimensional regime. In the classical low dimensional asymptotic regime where $p$ is fixed and $n \to \infty$, \citet{slawski2015estimation} apply standard results for $M$-estimators to show
consistency of $\estprec$. More recently, \cite{lauritzen2019maximum} provide an elegant perspective on $\estprec$ and a bound on the support graph $G(\estprec) = \{(j, k) : \estprec_{jk} < 0\}$, and \cite{wang2019learning} develop a consistent estimator of $G(\trueprec)$. 

The study of consistency
and optimality properties of $\estprec$ requires fixing an
appropriate loss function. Because $\estprec$ is defined via
maximum likelihood, it is natural to work with the
\textit{Stein loss}: 
\begin{align}\label{eq-stein-def}
L^\text{s}(\aprec, \trueprec) 
&:=  \frac{1}{p}\<\aprec, \truecov\> - \frac{1}{p}\log\det \aprec\truecov - 1,
\end{align}
which, up to scaling by $p$, is the Kullback-Leibler divergence between
multivariate mean zero normal distributions with precision matrices 
$\aprec$ and $\trueprec$ respectively. The Stein loss has a long
history of application in covariance matrix estimation \citep{james1961, stein1975estimation, stein1986lectures, dey1985estimation, ledoit2018optimal, donoho2018}. In this paper, we work with the 
\textit{symmetrized Stein loss} (alternatively known as the
\textit{divergence loss}), defined as
\begin{equation}\label{eq-divergence-loss-defn}%\label{eq-divergence-loss-defn}
\begin{aligned}
\Ldiv(\aprec, \trueprec) 
&:= \frac{L^{\text{s}}(\aprec, \trueprec) + L^{\text{s}}(\trueprec,
  \aprec)}{2} 
= \frac{1}{2p}\left\<\aprec -  \trueprec, \truecov-\acov\right\>,
\end{aligned}
\end{equation}
where $\acov=\aprec^{-1}$. Note that $\Ldiv(\aprec, \trueprec)$ is symmetric and $2\Ldiv(\aprec, \trueprec)$ clearly dominates both the Stein
loss and the reversed Stein loss $L^{\text{s}}(\trueprec,
\aprec)$ (which is also known as the \textit{entropy loss}). Properties of $\Ldiv$ are further discussed in Section~\ref{sec-stein}. 

We use the $1/p$ scaling in the loss function \eqref{eq-divergence-loss-defn} because, as explained by \citet{ledoit2018optimal}, this is necessary for consistency in the high-dimensional regime where the number of variables $p$ may be much larger than the sample size $n$. Indeed, in the simple case where
$\aprec^*$ is known to be diagonal, the natural estimator is the
diagonal matrix $\hat{\aprec}^{\mathrm{DIAG}}$ with diagonal entries
$1/S_{jj}, {j=1,\dots,p}$ (where $S$ is the sample covariance matrix
defined in \eqref{samcova}). It is easy to see that
$\left\<\hat{\aprec}^{\mathrm{DIAG}} -  \trueprec, \truecov-\hat{\acov}^{\mathrm{DIAG}}\right\>$ is of the order $p/n$
which will be far from zero in the high-dimensional regime where $p >
n$. %We therefore adopt the normalization by $p$ in the definition of the symmetrized Stein loss used by \citet{ledoit2018optimal} and study the behavior of $\Ldiv(\estprec, \trueprec).$ 

We present results on the performance of $\estprec$ in the symmetrized Stein loss in Section \ref{sec-stein}. Our main result in Theorem~\ref{thm-symm-kl} implies that $\Ldiv(\estprec,
\trueprec)$ converges to zero as long as $\log p = o(n)$. This
implies high-dimensional consistency of $\estprec$. Moreover, the rate of convergence is $\sqrt{\frac{\log p}{n}}$, which we prove in Theorem \ref{thm-minimax-rate} is
optimal in the minimax sense. Thus $\estprec$ is minimax optimal
in the high-dimensional regime under the symmetrized Stein loss. Our results provide rigorous support for the
assertion that the nonpositive off-diagonal constraint provides
strong implicit regularization in the high-dimensional
regime. %As the minimax rate in $\Ldiv$ over positive diagonal matrices $\trueprec$ is $\frac{1}{n}$, a natural question is adaptation
In Theorem \ref{thm-instance-lower-bound}, we also lower bound the loss $\Ldiv(\estprec, \trueprec)$ which implies that the $\sqrt{n}$ rate is not an artifact of our analysis even when the true precision matrix $\trueprec$ is diagonal. 

High-dimensional consistency with the rate $\sqrt{\frac{\log p}{n}}$ has
appeared previously in many papers on covariance and precision matrix
estimation---see for instance  \cite{rothman2008sparse, yuan2010high, ravikumar2011high, cai2011constrained, sun2013sparse} and \cite{cai2016structure} for a review of rates in structured covariance estimation. Most of these results are
for estimators that use explicit regularizers (such as the $\ell_1$
penalty in the Graphical Lasso \cite{banerjee2008model, friedman2008sparse, mazumder2012graphical}),
which is crucially exploited by the proof techniques and assumptions employed in these
papers. By contrast, the regularization induced by the assumption
$\trueprec \in \M$ is implicit and we consequently use different arguments relying on
careful use of the KKT conditions underlying the optimization~\eqref{eq-log-det-div-min}. Our analysis identifies a bound relating the entries of an $M$-matrix to its spectrum, providing new insight into the simplifying structure of the convex cone $\M$.

The symmetrized Stein loss has the additional symmetry property of invariance under inversion:
$\Ldiv(\estcov, \truecov) = \Ldiv(\estprec, \trueprec)$  where
$\estcov := \estprec^{-1}$. This means that $\hat{\Sigma}$
is also a high-dimensionally-consistent estimator of $\truecov$. The
choice of the loss function is quite crucial here. In
Section~\ref{sec-spectral}, using the Perron-Frobenius theorem and a careful analysis of the entry-wise positive part $S_+$ of the sample covariance, we prove a negative result which shows that, for the maximum eigenvalue, $\estcov$ can be much worse as an
estimator of $\truecov$ compared to the sample covariance matrix
$S$. This result indicates that enforcing the sign-constraints can exacerbate bias in the estimation of the top eigenvalue. %\sout{the implicit regularization provided by the nonpositive off-diagonal constraint is not strong enough for the estimation of spectral quantities of $\truecov$ and additional tuning might be necessary for spectral analysis. {\color{red}\textbf{I am not sure I agree with this last statement. If the implicit regularization is not strong enough, we would expect the regularized estimator $\estcov$ to behave similarly to the unregularized estimator $S$. But we are clearly seeing here that $\estcov$ is quite different from (worse) $S$ for estimating the top eigenvalue.}}

The paper is organized as follows: Section~\ref{sec-stein} contains our main results establishing optimality of $\estprec$, Section~\ref{sec-spectral} establishes suboptimality under the spectral norm, and Section~\ref{sec-discussion} has a discussion which touches upon some related issues including misspecification (where $\trueprec \not\in \M$), estimation of correlation matrices and connections to shape-restricted regression. Finally Section \ref{sec-proofs} contains proofs of all the results of the paper. 

\section{Symmetrized Stein Loss: Consistency and Optimality}\label{sec-stein}
This section contains our results on the high-dimensional consistency and optimality of $\estprec$ under the symmetrized Stein loss $\Ldiv$ defined in \eqref{eq-divergence-loss-defn}. We start by describing some basic properties of $\Ldiv$. 

The expected value of the objective in \eqref{eq-log-det-div-min}, $\<\aprec, \truecov\> - \log\det \aprec$, agrees up to factors depending only on $\truecov$ with the {\sl Stein loss}~\eqref{eq-stein-def}, which is also a matrix Bregman divergence \citep{dhillon2008matrix}, proportional to the Kullback-Leibler (KL) divergence between centered multivariate Gaussian distributions: $\frac{2}{p}D(\cn(0, \Sigma)\|\cn(0,\truecov))$. It is well known that the KL divergence is not symmetric. When the inputs to the divergence are reversed, the resulting Bregman divergence is also known as the {\sl entropy loss}, $L^\text{ent}(\aprec, \trueprec) := L^\text{s}(\trueprec, \aprec)$. The sum of these loss functions dominates each, and conveniently does not directly involve any determinants. Following \citet{ledoit2018optimal}, we define $\Ldiv = \frac{L^\text{s} + L^\text{ent}}{2}$ to be the average of the two loss functions. Commonly known as the {\sl symmetrized Stein loss} or {\sl divergence loss}, $\Ldiv$ is equal to the \citet{jeffreys1946invariant} divergence between two centered multivariate Gaussian distributions, divided by $p$. Definition~\eqref{eq-divergence-loss-defn} entails a number of useful and important properties for the symmetrized Stein loss:
\begin{enumerate}[(i)]
\item (Nonnegativity) $\Ldiv(\aprec, \aprec^*) \ge 0$, with equality if and only if $\aprec = \aprec^*$. 
\item (Symmetry) $\Ldiv(\aprec, \aprec^*) = \Ldiv(\aprec^*, \aprec)$.
%\item[(iii)] (Scale-Invariance) $\Ldiv(\aprec, \aprec^*) = \Ldiv(c\aprec, c\aprec^*)$ for all $c > 0$.
\item (Invariance under inversion) $\Ldiv(\aprec, \aprec^*) = \Ldiv(\acov, \acov^*)$.
\item\label{property-congruence} (Invariance under congruent transformations) For all $p\times p$ nonsingular matrices $P$, we have the scale-invariance property:
\begin{equation}\label{eq-scale-invariance}
    \Ldiv(\aprec, \aprec^*) = \Ldiv(P^\TT\aprec P, P^\TT\aprec^*P)
\end{equation}
\end{enumerate}
The symmetrized Stein loss thus induces a natural geometry on the space of PSD matrices---see \cite{moakher2006symmetric} for a review and comparison to other geometries. We emphasize that triangle inequality fails to hold for both $\Ldiv$ and $\sqrt{\Ldiv}$. As a loss, $\Ldiv$ treats the dual problems of estimating the covariance matrix and the precision matrix equally. It can also be shown that the symmetrized Stein loss is equivalent to the squared Frobenius norm when the input matrices $\aprec$ and $\aprec^*$ have bounded spectra.

In terms of the eigenvalues $(\lambda_j)_{j=1}^p$ of $\aprec\truecov$, the symmetrized Stein loss is simply the goodness-of-fit measure
\begin{align}\label{eq-goodness-of-fit}
\Ldiv(\aprec, \trueprec)
= \frac{1}{p}\sum_{j=1}^p\frac{(\lambda_j-1)^2}{2\lambda_j}.
\end{align}
This alternative representation provides further insight into the normalization of the loss~\eqref{eq-divergence-loss-defn} with a factor of $p$. The symmetrized Stein loss is the expectation of the function $\lambda\mapsto \frac{(\lambda-1)^2}{2\lambda}$ with respect to the empirical spectral distribution of $\aprec\truecov$. This expectation measures how far the spectrum of $\aprec\truecov$ deviates from a point mass at one, which is the spectrum of the identity $I_p$. In asymptotic settings where $p=p(n)\to \infty$ as $n\to \infty$, a natural consistency criterion checks whether this expectation converges to zero. 

Our analysis of the symmetrized Stein loss $\Ldiv(\estprec, \trueprec)$ involves the maximum population correlation between any two variables: 
\begin{equation*}
    \max_{j \neq k} \frac{\Sigma^*_{jk}}{\sqrt{\Sigma^*_{jj} \Sigma^*_{kk}}}. 
\end{equation*}
We assume that the above quantity is strictly less than 1 which is clearly necessary for $\Sigma^*$ to be nonsingular i.e., for $\Theta^*$ to exist. Our bound on $\Ldiv(\estprec, \trueprec)$ will involve the quantity: 
\begin{align*}
    \gamma(\truecov) := \left(1-\max_{j\ne k} \frac{\truecov_{jk}}{\sqrt{\truecov_{jj}\truecov_{kk}}}\right)^{-1}.
\end{align*}
It is natural for $\gamma(\truecov)$ to enter the analysis in light of the existence result of \citet{slawski2015estimation} which states that the maximum sample correlation must be less than one in order for the estimator $\estprec$ to be well-defined. Note that $\gamma(\Sigma^*)$ is the smallest $\gamma \geq 1$ such that 
\begin{equation}\label{eq-gamma-def}
    \max_{j \neq k} \frac{\Sigma^*_{jk}}{\sqrt{\Sigma^*_{jj} \Sigma^*_{kk}}} \leq 1 - \gamma^{-1} < 1. 
\end{equation}
Because $\gamma(\Sigma^*)$ is defined in terms of population correlations, it is scale-invariant. Note that $\Ldiv$ also has this scale invariance property (see  \eqref{eq-scale-invariance}).

\begin{theorem}\label{thm-symm-kl} Let $S = n^{-1}X^\TT X$ denote the sample covariance matrix based on data matrix $X\in \R^{n\times p}$ with i.i.d.\ $\cn(0, \truecov)$ rows, where $\trueprec = (\truecov)^{-1}\in \M$. For all $n\ge c_1\gamma^2(\truecov)\log p$, the MLE $\estprec$ defined in~\eqref{eq-log-det-div-min} satisfies
\begin{align}\label{eq-div-thm}
\Ldiv(\estprec, \trueprec)
&\le c_2\gamma(\truecov)\sqrt{\frac{\log p}{n}},
\end{align}
%({\color{red}\textbf{$\gamma(\Theta^*)$ above should be $\gamma(\Sigma^*)$ right?}})
with probability at least $1-c_3p^{-2}$. Here $c_1,c_2,c_3$ are universal positive constants.
\end{theorem}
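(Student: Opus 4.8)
The plan is to extract a ``basic inequality'' from the first-order optimality conditions of the convex program \eqref{eq-log-det-div-min}, to bound its right-hand side by (the entrywise $\ell_\infty$ deviation of $S$ from $\truecov$) times (the entrywise $\ell_1$ size of $\estprec-\trueprec$), and to control the latter through a structural trace bound for matrices in $\M$. Since $\Ldiv$ (by \eqref{eq-scale-invariance}), the quantity $\gamma$, and the cone $\M$ are all invariant under congruence $\Theta\mapsto D\Theta D$ by a positive diagonal $D$, I may assume $\truecov$ is a correlation matrix; then $\truecov_{jj}=1$, and since $\truecov=(\trueprec)^{-1}$ with $\trueprec\in\M$ has nonnegative entries, $\truecov_{jk}\in[0,m]$ for $j\ne k$ with $m:=1-\gamma^{-1}$. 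A Bernstein bound for the sub-exponential products $X_{ij}X_{ik}$ together with a union bound over the at most $p^2$ pairs produces an event $\ce$ of probability at least $1-c_3p^{-2}$ on which $\|S-\truecov\|_{\max}\le\eps$ with $\eps\le c\sqrt{(\log p)/n}$; the hypothesis $n\ge c_1\gamma^2\log p$ guarantees (for $c_1$ large) that $\eps\le(4\gamma)^{-1}$. Everything below is on $\ce$.

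Next I would record the KKT conditions for \eqref{eq-log-det-div-min}. The objective $\Theta\mapsto\langle\Theta,S\rangle-\log\det\Theta$ is smooth on the positive-definite cone with gradient $S-\Theta^{-1}$, and $\M$ is a convex cone, so: (a) minimizing along the ray $\{t\estprec:t>0\}$ forces $\langle\estprec,S\rangle=p$, hence $\langle S-\estcov,\estprec\rangle=0$; (b) the variational inequality gives $\langle S-\estcov,\Theta\rangle\ge0$ for all $\Theta\in\M$; (c) since $\estprec\succ0$, the perturbations $\estprec\pm t\,e_je_j^\T$ are feasible for small $t$, forcing $\estcov_{jj}=S_{jj}$; (d) likewise $\estprec\pm t(e_je_k^\T+e_ke_j^\T)\in\M$ for small $t$ whenever $\estprec_{jk}<0$, forcing $\estcov_{jk}=S_{jk}$ on the support of $\estprec$; and (e) $\estcov$, as the inverse of an $M$-matrix, has nonnegative entries. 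Writing $2p\,\Ldiv(\estprec,\trueprec)=\langle\estprec-\trueprec,\truecov-\estcov\rangle$, decomposing $\truecov-\estcov=(\truecov-S)+(S-\estcov)$, and using (a) to annihilate $\langle\estprec,S-\estcov\rangle$ and (b) with $\Theta=\trueprec$ to get $-\langle\trueprec,S-\estcov\rangle\le0$, I arrive at
\[
2p\,\Ldiv(\estprec,\trueprec)\ \le\ \langle\estprec-\trueprec,\truecov-S\rangle\ \le\ \eps\bigl(\|\estprec\|_1+\|\trueprec\|_1\bigr),\qquad\|A\|_1:=\textstyle\sum_{j,k}|A_{jk}|.
\]

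The crux is then a trace bound for $\M$. For any $\Theta\in\M$, nonpositivity of the off-diagonal and $\Theta\succeq0$ give $\|\Theta\|_1=2\operatorname{tr}\Theta-\mathbf 1^\T\Theta\mathbf 1\le2\operatorname{tr}\Theta$ and $\sum_{j\ne k}|\Theta_{jk}|=\operatorname{tr}\Theta-\mathbf 1^\T\Theta\mathbf 1\le\operatorname{tr}\Theta$, so it suffices to show $\operatorname{tr}\trueprec,\operatorname{tr}\estprec\lesssim p\gamma$. For $\trueprec$, expand $p=\langle\trueprec,\truecov\rangle=\operatorname{tr}\trueprec-\sum_{j\ne k}|\trueprec_{jk}|\,\truecov_{jk}\ge\operatorname{tr}\trueprec-m\sum_{j\ne k}|\trueprec_{jk}|\ge(1-m)\operatorname{tr}\trueprec$, so $\operatorname{tr}\trueprec\le p\gamma$. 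For $\estprec$, combine (c)--(e): $p=\langle\estprec,\estcov\rangle=\sum_j\estprec_{jj}S_{jj}-\sum_{j\ne k}|\estprec_{jk}|\,\estcov_{jk}$, where on the support of $\estprec$ one has $\estcov_{jk}=S_{jk}\le\truecov_{jk}+\eps\le m+\eps$; hence $p\ge(1-\eps)\operatorname{tr}\estprec-(m+\eps)\operatorname{tr}\estprec=(1-m-2\eps)\operatorname{tr}\estprec$, and since $1-m-2\eps\ge\gamma^{-1}-\tfrac12\gamma^{-1}=\tfrac12\gamma^{-1}$ this gives $\operatorname{tr}\estprec\le2p\gamma$. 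Substituting $\|\estprec\|_1+\|\trueprec\|_1\le2(\operatorname{tr}\estprec+\operatorname{tr}\trueprec)\le6p\gamma$ into the basic inequality yields $\Ldiv(\estprec,\trueprec)\le3\gamma\eps\le c_2\gamma\sqrt{(\log p)/n}$ on $\ce$, as required.

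I expect the $\estprec$-side of the trace bound to be the main obstacle, because this is exactly where the implicit regularization of the sign constraint must be quantified: the argument succeeds only because the MLE reproduces the sample variances exactly and the sample covariances on its own edge set, so that every off-diagonal of $\estcov$ multiplying a nonzero entry of $\estprec$ is at most $m+\eps<1$. Making the supporting KKT bookkeeping rigorous---two-sided feasibility of the diagonal and edge perturbations (which uses $\estprec\succ0$), the sign fact $\estcov\ge0$, and the ray identity $\langle\estprec,S\rangle=p$---requires care; the concentration step and the reduction to a correlation matrix are routine, the latter needing only invariance of $\Ldiv$, $\gamma$, and $\M$ under diagonal congruence.
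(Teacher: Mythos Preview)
Your proof is correct and shares the paper's overall architecture: reduce to a correlation matrix via diagonal congruence, extract the basic inequality $2p\,\Ldiv(\estprec,\trueprec)\le\langle\estprec-\trueprec,\truecov-S\rangle$ from first-order optimality, apply H\"older with $A=\|S-\truecov\|_\infty$, use the $M$-matrix bound $\|\Theta\|_1\le2\operatorname{tr}\Theta$, and control $A$ by a Bernstein-type tail bound. The one substantive difference is in how $\operatorname{tr}\estprec$ is controlled. The paper observes that the \emph{same} inequality used for $\trueprec$---namely $\operatorname{tr}\Theta\le\gamma\operatorname{tr}(\Theta\truecov)$ for any $\Theta\in\M$ when $\truecov$ is the correlation matrix---applies to $\estprec$, and then bounds $\operatorname{tr}(\estprec\truecov)=\sum_j\lambda_j(\estprec\truecov)\le2p(1+\Ldiv)$ via the spectral representation~\eqref{eq-goodness-of-fit}; this yields a self-referential inequality $\Ldiv\le\gamma A(3+2\Ldiv)$ which is rearranged on the event $\{2\gamma A\le\tfrac12\}$. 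You instead invoke the finer complementary-slackness conditions (c) and (d)---$\estcov_{jj}=S_{jj}$ and $\estcov_{jk}=S_{jk}$ on the edge set of $\estprec$---together with $\langle\estprec,\estcov\rangle=p$ to obtain $\operatorname{tr}\estprec\le2p\gamma$ directly, bypassing the self-referential step. Your route is a bit more hands-on with the KKT system but avoids the rearrangement and gives a slightly sharper constant; the paper's route is more economical in that it uses only the single variational inequality $\langle S-\estcov,\Theta-\estprec\rangle\ge0$ for $\Theta\in\M$. (A minor remark: your condition (e), that $\estcov\ge0$ entrywise, is listed but not actually needed in your trace argument.)
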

%\noindent{\color{blue}\textbf{Supposing $n > p+1$ do we have $\Ldiv(\estprec, \trueprec) \le \Ldiv(S^{-1}, \trueprec)$?}}

Theorem~\ref{thm-symm-kl} states that $\estprec$ is high-dimensionally consistent in the symmetrized Stein loss $\Ldiv$ as long as $\log p = o(n)$. We prove Theorem~\ref{thm-symm-kl} in Section~\ref{sec-proofs}, deriving a basic inequality from the first order optimality conditions for~\eqref{eq-log-det-div-min} and showing that concentration of the intrinsic noise $\|S-\truecov\|_\infty$ is sufficient to control the basic inequality. Crucially, we use the fact that every $M$-matrix $\aprec\in \M$ is up to diagonal scaling equivalent to a diagonally dominant matrix \citep[see][Chap. 6, Property $M_{34}$]{berman1994nonnegative}. 

We emphasize that the result holds without additional assumptions on the underlying precision matrix such as sparsity. Consistency in the symmetrized Stein loss is a strong guarantee compared to the recent literature on optimal shrinkage of the sample covariance $S$ under high-dimensional asymptotics \citep{donoho2018, ledoit2018optimal}, where the symmetrized Stein loss $\Ldiv$ converges to a nonzero limit under the asymptotic regime $p/n\to \alpha > 0$ as $n\to \infty$. By contrast, for the constrained MLE the loss $\Ldiv(\estprec, \trueprec)$ converges in probability to zero whenever $\log p=o(n)$.

%{\color{red} \textbf{You might say here that one might sometimes be only interested in estimating the correlation matrix $\truecor$ in which case one can use the estimator with $S$ replaced by $R$. You can then say that Theorem 1 gives almost the same result in this setting with a better sample complexity and then state that result. (add motivation, e.g. Rothman et al or Sun \& Zhang) (cite Lauritzen for equivalence between scaling estimator and estimator based on $R$ -- not true for Glasso)}}

Since the upper bound \eqref{eq-div-thm} depends only on the true precision matrix $\trueprec$ through the population quantity $\gamma(\truecov)$, Theorem~\ref{thm-symm-kl} actually bounds the worst case risk obtained from the divergence  loss over all $M$-matrices $\trueprec$ with $\gamma(\truecov)$ bounded. It is natural to question whether the $\sqrt{n}$ rate is improvable. Our next result shows that, in the high-dimensional setting where $p$ grows superlinearly in $n$, the minimax rate over the class of $M$-matrices with $\gamma(\truecov)\le \gamma$ matches the $\sqrt{\frac{\log p}{n}}$ rate from Theorem~\ref{thm-symm-kl}. 

% exist universal constants and c_gamma
\begin{theorem}\label{thm-minimax-rate} Let $X\in\R^{n\times p}$ have i.i.d.\ $\cn(0, \truecov)$ rows, and suppose the number of variables $p$ satisfies $c_1n^\beta\le p\le \exp(c_2 n)$. For every $\gamma > 1$, we have%The minimax risk in the symmetrized Stein loss over $M$-matrices with bounded $\gamma(\truecov)\le \gamma$ satisfies
\begin{align}\label{eq-minimax}
\inf_{\breve\aprec = \breve\aprec(X)}\sup_{\substack{\trueprec\in \M \\ \gamma(\truecov) \le \gamma}} \E\Ldiv(\breve\aprec, \trueprec)
\ge c_\gamma\sqrt{\frac{\log p}{n}}.
\end{align}
 Here $c_1,c_2 >0$ and $\beta > 1$ are universal constants and $c_\gamma > 0$ is a constant depending only on $\gamma$.
\end{theorem}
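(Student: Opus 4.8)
The plan is to prove the bound by Fano's method, applied to a family of $M$-matrices that are hard to distinguish from $n$ Gaussian samples yet well separated in $\Ldiv$. The key feature is that the family will be a \emph{sparse} perturbation of the identity, so designed that recovering each row of the precision matrix becomes a sparse sequence-estimation problem in ambient dimension of order $p$; the $\log p$ factor in the rate then emerges just as it does in the classical $\ell_0$/$\ell_1$-ball lower bounds. It suffices to argue for one fixed value of $\gamma$, say $\gamma=2$: for $\gamma\ge2$ the parameter class only grows, while for $1<\gamma<2$ one multiplies the perturbation amplitude by a factor of order $1-\gamma^{-1}$, which multiplies both the $\Ldiv$-separation and the Kullback--Leibler contents by bounded amounts and is absorbed into $c_\gamma$. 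I will use the eigenvalue form of the loss from \eqref{eq-goodness-of-fit}, $\Ldiv(\aprec,\trueprec)=\tfrac1p\sum_j\tfrac{(\lambda_j-1)^2}{2\lambda_j}$ with $\lambda_1,\dots,\lambda_p$ the eigenvalues of $\aprec\truecov$, together with the fact that the $n$-sample KL divergence from $\cn(0,\aprec^{-1})$ to $\cn(0,\trueprec^{-1})$ equals $\tfrac n2\sum_j(\mu_j-1-\log\mu_j)$, where $\mu_1,\dots,\mu_p$ are the eigenvalues of $\trueprec\aprec^{-1}$.

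Concretely, put $s:=\lceil\tfrac12\sqrt{n/\log p}\,\rceil$ and $\tau:=\kappa\sqrt{\log p/n}$ for a small universal constant $\kappa$ (so $s\asymp\sqrt{n/\log p}$ and $\tau s\asymp\kappa$), split the variables into two halves, and for each $0/1$ matrix $B\in\{0,1\}^{(p/2)\times(p/2)}$ with exactly $s$ ones per row and at most $2s$ ones per column set
\[
 \aprec_B:=I_p-\tau E_B,\qquad E_B:=\begin{pmatrix}0&B\\ B^{\TT}&0\end{pmatrix},\qquad \acov_B:=\aprec_B^{-1}.
\]
Each $\aprec_B$ has nonpositive off-diagonal entries, hence lies in $\M$; since $\|E_B\|_{\mathrm{op}}\le\sqrt{(\max\text{ row sum})(\max\text{ col sum})}\le s\sqrt2$ we have $\|\tau E_B\|_{\mathrm{op}}\le\tfrac12$ for $\kappa$ small, so the spectra of $\aprec_B$ and $\acov_B$ lie in $[\tfrac12,2]$; and expanding $\acov_B=\sum_{m\ge0}\tau^mE_B^m$ and using $|(E_B^m)_{jk}|\le s^{\,m-1}$ for $j\ne k$ shows $\max_{j\ne k}|(\acov_B)_{jk}|\lesssim\tau=o(1)$, so the maximal population correlation tends to $0$ and $\gamma(\acov_B)\le2$ once $p$ is large. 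The hypotheses on $p$ are used exactly here: $p\le\exp(c_2n)$ forces $s\ge1$, i.e.\ there is room for a genuine sparse signal, and $p\ge c_1n^\beta$ with $\beta>1$ forces $\log(p/s)\ge(1-\tfrac1{2\beta})\log p\gtrsim\log p$, i.e.\ there are polynomially many admissible positions for a nonzero entry within a row.

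Because every $\aprec_B,\aprec_{B'}$ has spectrum in $[\tfrac12,2]$, all the relevant products $\aprec_{B'}\aprec_B^{-1}$ have eigenvalues in a fixed compact subset of $(0,\infty)$, on which $t\mapsto\tfrac{(t-1)^2}{2t}$ and $t\mapsto t-1-\log t$ are both comparable to $(t-1)^2$; combining this with the two displays of the first paragraph and with $\|\aprec_{B'}-\aprec_B\|_F^2=2\tau^2\,d_H(B,B')$, where $d_H$ counts differing entries, gives
\[
 \Ldiv(\aprec_B,\aprec_{B'})\ \asymp\ \tfrac{\tau^2}{p}\,d_H(B,B'),\qquad D\big(\cn(0,\acov_B)^{\otimes n}\,\|\,\cn(0,\acov_{B'})^{\otimes n}\big)\ \asymp\ n\tau^2\,d_H(B,B')
\]
for all $B,B'$ in the family. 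A Gilbert--Varshamov/packing argument supplies a set $\mathcal B$ of admissible matrices with $\log|\mathcal B|\gtrsim ps\log(p/s)\gtrsim ps\log p$ and with $d_H(B,B')\ge c\,ps$ for all $B\ne B'$; since $d_H\le ps$ always, the worst-case $n$-sample KL over $\{\aprec_B:B\in\mathcal B\}$ is $\lesssim n\tau^2ps\asymp\kappa^2\,p\sqrt{n\log p}$, whereas $\log|\mathcal B|\gtrsim ps\log p\asymp p\sqrt{n\log p}$ with a constant free of $\kappa$. Taking $\kappa$ small makes the former at most a quarter of the latter, so Fano's inequality gives
\[
 \inf_{\breve\aprec=\breve\aprec(X)}\ \sup_{B\in\mathcal B}\ \E\,\Ldiv(\breve\aprec,\aprec_B)\ \gtrsim\ \min_{B\ne B'}\Ldiv(\aprec_B,\aprec_{B'})\ \gtrsim\ \tfrac{\tau^2}{p}\cdot ps\ =\ \tau^2s\ \asymp\ \kappa^2\sqrt{\tfrac{\log p}{n}},
\]
and since $\{\aprec_B:B\in\mathcal B\}\subseteq\{\trueprec\in\M:\gamma(\truecov)\le\gamma\}$ (directly for $\gamma\ge2$, after the rescaling for $1<\gamma<2$), this is the asserted bound.

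I expect the main obstacle to be that the argument lives right at the critical regime: $\log|\mathcal B|$ and the worst-case $n$-sample KL are both of exact order $p\sqrt{n\log p}$, so the amplitude $\tau\asymp\sqrt{\log p/n}$ must be calibrated with a sufficiently small constant, and one must genuinely produce a packing of admissible (sparse-per-row, bounded-per-column) $0/1$ matrices of the claimed cardinality and minimum distance — and the two hypotheses on $p$, together with the balance $\tau s\asymp\kappa$, are precisely what make this possible. A secondary technical point is that $\Ldiv$ is not a metric and fails the triangle inequality (Section~\ref{sec-stein}), so the reduction from estimation to testing must be run in a form that tolerates this: either through the average-risk version of Fano, or by noting that on the spectrally bounded family $\{\aprec_B\}$ the loss $\Ldiv$ is sandwiched between constant multiples of $p^{-1}\|\cdot\|_F^2$ and reducing, at the cost of a constant factor, to estimators valued in a fixed spectral band.
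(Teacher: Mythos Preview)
Your proposal is correct in outline and in its choice of construction: the family of precision matrices---identity plus a small multiple of a block off-diagonal sparse binary matrix, with row sparsity $s\asymp\sqrt{n/\log p}$ and amplitude $\tau\asymp\sqrt{\log p/n}$---is exactly what the paper uses. The paper likewise verifies the $M$-matrix and $\gamma(\acov)\le\gamma$ properties via the Neumann series and Ger\u{s}gorin, and reduces $\Ldiv$ to $p^{-1}\|\cdot\|_F^2$ on the spectrally bounded family (citing \citet[Eq.~(54)]{cai2012optimal} for the step you flag as ``secondary'').

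Where you diverge is in the information-theoretic lower bound. The paper does \emph{not} run a direct Fano argument with a Gilbert--Varshamov packing; instead it invokes the two-directional Assouad-type Lemma~3 of \citet{cai2012optimal}, parametrizing the family by a pair $(A,b)$ with $b\in\{0,1\}^{\lceil p/2\rceil}$ indicating which rows of $A$ are ``switched on.'' The separation is then measured row-by-row in the Hamming distance on $b$, and the testing difficulty is captured by a total-variation affinity between \emph{mixtures} over $A$, for which the paper defers to \citet[Lemma~4.5]{cai2016estimating}. This mixture technique sidesteps precisely the obstacle you correctly identify as primary: one never needs to produce a large packing of row-sparse, column-bounded binary matrices with min distance of order $ps$---the set $\ca$ is used in full as a prior. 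Your direct Fano route does work (the column-constrained set still has log-cardinality of order $ps\log(p/s)$, and a Gilbert--Varshamov argument inside it yields the required packing), and is arguably more elementary once the packing is in hand; the paper's route is less self-contained but trades the combinatorial construction for a sharper affinity calculation that is already available in the literature.
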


Paired with Theorem~\ref{thm-symm-kl}, this result implies that $\estprec$ is minimax optimal in the symmetrized Stein loss over $M$-matrices with correlations bounded away from one. Our proof adapts the construction of \citet{cai2016estimating}, Theorem 4.1, which lower bounds the minimax risk in the spectral norm over a parameter set of sparse precision matrices of the form $I + \eps A$, where $\eps$ depends on problem parameters $p$ and $n$, and $A$ is an adjacency matrix. A key aspect of this approach is to allow for different perturbations over the rows and columns of $A$, in order to recover the $\sqrt{n}$ rate \citep{kimobtaining}. 

The $M$-matrix constraint provides implicit regularization and is crucial for achieving the minimax rate $\sqrt{\frac{\log p}{n}}$. If this constraint is dropped, it is impossible for any estimator to achieve a rate better than $\sqrt{\frac{p}{n}}$ when $p > n$. This follows from the next result where we prove a minimax lower bound of $\sqrt{\frac{p}{n}}$ for the $\Ldiv$ loss function over the entire class $\psd$ of positive semidefinite matrices when $p > n$. On the other hand, $\M$ is much larger than diagonal matrices because the minimax rate of estimation over the class $\cd$ of positive diagonal matrices in the $\Ldiv$ loss function is $1/n$ (this is also proved in the next result). In summary, the class of $M$-matrices acts as a strong high-dimensional regularizer while being considerably larger than the class of all positive diagonal matrices.  

\begin{prop}\label{prop-minimaxdiagpsd}
Fix $p$ and $n > 2$. The minimax risk in the symmetrized Stein loss over diagonal precision matrices satisfies
\begin{align}\label{eq-minimax-diag}
\inf_{\hat\aprec = \hat\aprec(X)}\sup_{\trueprec\in \cd} \E\Ldiv(\hat\aprec, \trueprec)
%&= 2p\left(\sqrt{1 + \frac{2}{n-2}}-1\right) \\
\asymp \frac{1}{n}.
\end{align}
The minimax risk in the symmetrized Stein loss over PSD matrices satisfies
\begin{align}\label{eq-minimax-psd}
\inf_{\hat\aprec = \hat\aprec(S)}\sup_{\trueprec\in \psd} \E\Ldiv(\hat\aprec, \trueprec)
%\ge \frac{1}{2}\min\left\{\frac{p+1}{n}, \sqrt{\frac{p+1}{n}}\right\}.
\gtrsim \min\left\{\frac{p}{n}, \sqrt{\frac{p}{n}}\right\}.
\end{align}
\end{prop}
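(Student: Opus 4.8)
I plan to prove the two displays separately; both reduce the symmetrized Stein loss to a sum of one-dimensional problems, one per eigendirection of $\truecov$.

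\emph{Diagonal upper bound.} Use the diagonal analogue of the MLE, $\hat{\aprec}^{\mathrm{DIAG}}:=\argmin_{\aprec\in\cd}\{\<\aprec,S\>-\log\det\aprec\}$, whose $j$-th diagonal entry is $1/S_{jj}$. For diagonal $\trueprec$ the eigenvalues of $\hat{\aprec}^{\mathrm{DIAG}}\truecov$ are $\lambda_j=\truecov_{jj}/S_{jj}=n/V_j$ with $V_j\sim\chi^2_n$, so by~\eqref{eq-goodness-of-fit},
\[
\E\,\Ldiv(\hat{\aprec}^{\mathrm{DIAG}},\trueprec)=\frac1p\sum_{j=1}^p\E\,\frac{(V_j-n)^2}{2nV_j}=\E\!\left[\frac{V}{2n}-1+\frac{n}{2V}\right]=\frac12-1+\frac{n}{2(n-2)}=\frac{1}{n-2}\le\frac3n,
\]
using $\E[V^{-1}]=(n-2)^{-1}$ for $V\sim\chi^2_n$, $n>2$; this is exact and uniform over $\cd$, and will match the lower bound.

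\emph{The reduction.} For any $\aprec\in\pd$ (write $\acov=\aprec^{-1}$) and any $\truecov=O\Lambda O^\TT$ with $O$ orthogonal and $\Lambda=\mathrm{diag}(\lambda_1,\dots,\lambda_p)$, set $A:=O^\TT\acov O$. Expanding~\eqref{eq-divergence-loss-defn} and using $(A^{-1})_{jj}\ge A_{jj}^{-1}$ (Cauchy--Schwarz),
\begin{equation}\label{eq-plan-key}
\Ldiv(\aprec,\trueprec)=\frac1{2p}\sum_{j=1}^p\Bigl[\tfrac{A_{jj}}{\lambda_j}+\lambda_j(A^{-1})_{jj}-2\Bigr]\ \ge\ \frac1p\sum_{j=1}^p\psi\!\left(\tfrac{A_{jj}}{\lambda_j}\right),\qquad\psi(t):=\tfrac{(t-1)^2}{2t}.
\end{equation}
Since $A_{jj}=(Oe_j)^\TT\acov(Oe_j)$ depends on $\acov$ only, \eqref{eq-plan-key} recasts the loss as an average, over the eigendirections $Oe_j$ of $\truecov$, of a scalar goodness-of-fit. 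For the lower bound in~\eqref{eq-minimax-diag} take $O=I$ (so $A_{jj}=\acov_{jj}$) and the product prior $\truecov_{jj}\simiid\mathrm{Unif}\{1,1+cn^{-1/2}\}$: the columns of $X$ are independent, so for each $j$ the Bayes-optimal $\acov_{jj}$ uses only $(X_{ij})_{i\le n}$, and a two-point bound---the two product laws have KL $\asymp c^2$ while $\psi(u/\sigma_0)+\psi(u/\sigma_1)\gtrsim(\sigma_1-\sigma_0)^2\asymp c^2/n$ for all $u>0$---gives per-coordinate Bayes risk $\gtrsim 1/n$, hence $\gtrsim 1/n$ after averaging.

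\emph{The $\psd$ lower bound.} When $p\le n$ I would argue locally: let $\trueprec=I+\tau B$ with $B$ hollow symmetric, $B_{jk}\simiid\mathrm{Unif}\{-1,1\}$ on a fixed band with $s:=\min(p,c_1 n)$ nonzeros per row, and $\tau\asymp n^{-1/2}$ so that $\tau\|B\|_{\mathrm{op}}\lesssim\tau\sqrt s\le\tfrac12$; then $\truecov,\trueprec\in[\tfrac12 I,\tfrac32 I]$, so $\Ldiv\asymp\frac1p\|\aprec-\trueprec\|_F^2$ on this set, and since $S-\truecov$ has entrywise fluctuations of order $n^{-1/2}$, Assouad over the $\asymp ps$ bits gives $\E\|\hat\aprec-\trueprec\|_F^2\gtrsim ps\tau^2\asymp ps/n$, i.e.\ $\Ldiv\gtrsim s/n\asymp p/n$. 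When $p>n$ I would instead exploit the rank deficiency of $S$: take $\truecov=O\Lambda O^\TT$ with $O$ Haar on the orthogonal group and $\lambda_j\simiid\mathrm{Unif}\{\rho^{-1},\rho\}$, $\rho:=(p/n)^{1/2}$. By~\eqref{eq-plan-key} and exchangeability it is enough to show $\E\,\psi(A_{11}/\lambda_1)\gtrsim\rho$, where $A_{11}=q^\TT\hat\acov(S)q$ and $q=Oe_1$. Writing $S=\truecov^{1/2}W\truecov^{1/2}$ with $W=n^{-1}X_0^\TT X_0$, $X_0$ having i.i.d.\ standard normal entries, the $n$-dimensional $\mathrm{col}(S)$ overlaps a generic eigenvector of $\truecov$ by only $O(n/p)$: the data localizes at most $O(n)$ of the $p$ eigendirections, and for the rest---with probability $\ge\tfrac12$ for index $1$ once $p\ge Cn$---the conditional law of $\lambda_1$ given $(S,q)$ is essentially the prior. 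For that two-point prior any point estimate $c$ obeys $\tfrac12[\psi(c\rho)+\psi(c/\rho)]\ge\sqrt{\E\lambda_1\,\E\lambda_1^{-1}}-1=\tfrac12(\sqrt\rho-\rho^{-1/2})^2\gtrsim\rho$, giving $\E\,\psi(A_{11}/\lambda_1)\gtrsim\rho=\sqrt{p/n}$ and hence~\eqref{eq-minimax-psd}.

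\emph{Main obstacle.} Everything except the last ``blindness'' step is routine. The crux is to promote ``$S$ localizes only $O(n)$ eigendirections'' to the quantitative claim that, conditionally on $S$, a constant fraction of the pairs $(Oe_j,\lambda_j)$ keep (almost) all of their prior randomness---equivalently, that no $\hat\acov(S)$ can make $q^\TT\hat\acov q/\lambda_1$ close to $1$ with non-negligible probability. This is a random-matrix statement about $S=\truecov^{1/2}W\truecov^{1/2}$ (a BBP/eigenvector-delocalization effect: a bulk eigenvector of $\truecov$ has $O(n/p)$ squared projection onto $\mathrm{col}(S)$, so the mutual information between $S$ and the ``unseen'' $(\lambda_j,Oe_j)$ is $o(p)$), and making it rigorous---via Fano over the Grassmannian of unseen subspaces, or a direct second-moment bound on $q^\TT\hat\acov q$---is the technical heart of the proof. (Letting $\rho\to\infty$ in the same construction in fact shows the $\psd$ minimax risk is $+\infty$ once $p\gtrsim n$; the stated finite rate is written so as to also cover $p\le n$.)
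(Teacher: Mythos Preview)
Your diagonal upper bound is exactly the paper's (they also take $\hat\acov_{jj}=S_{jj}$ and compute $\E\Ldiv=\tfrac{1}{n-2}$). Your diagonal lower bound via a product two-point prior and the reduction $(A^{-1})_{jj}\ge A_{jj}^{-1}$ is correct in outline, though the paper does something cleaner: it places a Gamma$(a,b)$ prior on each $\trueprec_{jj}$, uses Gamma--Gaussian conjugacy to get the posterior in closed form, and computes the Bayes risk of the Bayes estimator exactly as $\sqrt{(a+n/2)/(a+n/2-1)}-1$; letting $a\downarrow 0$ yields $\sqrt{1+2/(n-2)}-1\sim 1/n$. No Le~Cam, no testing.

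For the $\psd$ lower bound your plan is unnecessarily hard, and the $p>n$ half has the gap you yourself flag: the ``blindness'' claim that $S$ localizes only $O(n)$ eigendirections, so that for a constant fraction of $j$ the conditional law of $\lambda_j$ given $S$ is close to its prior, is exactly the kind of eigenvector-delocalization statement that is delicate to make rigorous, and you have not done so. The paper avoids all of this with a one-line conjugate-prior argument that works uniformly in $p,n$: put an inverse-Wishart prior $\truecov\sim\cw^{-1}(\Sigma_0,\nu)$, so that the posterior is again inverse-Wishart $\cw^{-1}(\Sigma_0+nS,\nu+n)$. Since $\Ldiv(\hat\aprec,\trueprec)=\tfrac1{2p}\bigl[\text{tr}(\hat\acov\trueprec)+\text{tr}(\hat\aprec\truecov)-2p\bigr]$ is \emph{linear} in $(\trueprec,\truecov)$, the posterior risk depends only on the posterior means $\E[\trueprec\mid S]=(\nu+n)(\Sigma_0+nS)^{-1}$ and $\E[\truecov\mid S]=(\Sigma_0+nS)/(\nu+n-p-1)$, and minimizing over $\hat\aprec$ gives the exact Bayes risk
\[
\sqrt{\tfrac{\nu+n}{\nu+n-p-1}}-1.
\]
Taking $\nu=p+1$ and using $\sqrt{1+x}-1\gtrsim x\wedge\sqrt{x}$ gives $\gtrsim\min\{p/n,\sqrt{p/n}\}$ directly---no Assouad, no random-matrix theory, no case split on $p\lessgtr n$. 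The moral is that $\Ldiv$ is tailor-made for conjugate analysis; your Frobenius/Assouad route for $p\le n$ would also go through, but it is strictly more work and does not cover $p>n$.
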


%Over the entire class $\psd \supset\M$, for instance, a straightforward Bayes risk calculation shows a minimax lower bound of order $\sqrt{\frac{p}{n}}$ in $\Ldiv$ when $p > n$. By contrast, over the smaller class $\cd\subset\M$ of positive diagonal matrices, the minimax rate is $\frac{1}{n}$. For completeness, we include these calculations in Section~\ref{ssec-minimax-proofs}. 

Theorem \ref{thm-minimax-rate} implies that the $\sqrt{n}$ rate of Theorem \ref{thm-symm-kl} cannot be improved in worst case over the entire class $\M$. In the next result, we prove that the $\sqrt{n}$ rate for $\estprec$ cannot be improved even when the truth $\Theta^*$ lies in the class $\cd$ of positive diagonal matrices. In other words, this shows that $\estprec$ does not adapt to the minimax rate over $\cd$. 

%We close this section with a lower bound on the risk of $\estprec$, showing that the $\sqrt{n}$ rate of Theorem~\ref{thm-symm-kl} is not merely a loose upper bound but in fact captures the behavior of $\Ldiv(\estprec, \trueprec)$ even when $\trueprec\in\cd$. In particular, we show $\estprec$ does not adapt to the minimax rate over $\cd$.

\begin{theorem}\label{thm-instance-lower-bound} Suppose $\trueprec\in \cd$ is a positive diagonal matrix and $c_1p \ge \sqrt{n}$. Then 
\begin{align}
\Ldiv(\estprec, \trueprec) &\ge \frac{c_1}{2\sqrt{n}},
\end{align}
with probability at least $1-3p\exp\left(-c_2(n\land p)\right)$, where $c_1$ and $c_2$ are universal positive constants.
\end{theorem}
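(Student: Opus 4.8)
The plan is to reduce to the isotropic case $\trueprec=\truecov=I_p$, extract from the first‐order optimality conditions a pointwise domination of $\estcov$ by the entrywise positive part $S_+$ of the sample covariance, and then exploit the fact that for diagonal $\trueprec$ the matrix $S_+$ already has a spuriously large top eigenvalue of order $p/\sqrt n$, which forces $\estprec$ to have a tiny eigenvalue and hence inflates $\Ldiv$. First I would use the congruence‐invariance of $\Ldiv$ (property~\eqref{property-congruence}) together with the fact that the cone $\M$ and the estimator~\eqref{eq-log-det-div-min} are equivariant under positive diagonal congruence of the data to assume without loss of generality $\trueprec=\truecov=I_p$, so the rows of $X$ are i.i.d.\ $\cn(0,I_p)$; the hypothesis $c_1p\ge\sqrt n$ and the claimed probability are unaffected. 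On the almost sure event that $\estprec$ exists, its first‐order optimality conditions for~\eqref{eq-log-det-div-min} read (as derived in \citet{slawski2015estimation}) $\estcov_{jj}=S_{jj}$ and $\estcov_{jk}\ge S_{jk}$ for $j\ne k$, where $\estcov:=\estprec^{-1}$; and since $\estprec$ is a nonsingular symmetric $M$‐matrix its inverse $\estcov$ is entrywise nonnegative \citep[][Chap.\ 6]{berman1994nonnegative}. Hence $\estcov_{jk}\ge(S_+)_{jk}:=\max(S_{jk},0)$ for all $j,k$, and testing $\estcov$ against the all‐ones vector gives
\begin{equation*}
\lambda_{\max}(\estcov)\;\ge\;\frac{\mathbf 1^{\TT}\estcov\,\mathbf 1}{p}\;\ge\;\frac{\mathbf 1^{\TT}S_+\,\mathbf 1}{p}\;\ge\;\frac{1}{p}\sum_{j\ne k}(S_{jk})_+ .
\end{equation*}

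Next I would convert this into a loss lower bound. Writing $\mu_{\min}:=\lambda_{\min}(\estprec)=1/\lambda_{\max}(\estcov)$, the goodness‐of‐fit representation~\eqref{eq-goodness-of-fit} with $\truecov=I_p$ gives
\begin{equation*}
\Ldiv(\estprec,I_p)\;=\;\frac1p\sum_{j=1}^p\frac{(\lambda_j-1)^2}{2\lambda_j}\;\ge\;\frac1p\cdot\frac{(1-\mu_{\min})^2}{2\mu_{\min}} ,
\end{equation*}
so it suffices to prove $\sum_{j\ne k}(S_{jk})_+\gtrsim p^2/\sqrt n$ with the stated probability: on that event $\mu_{\min}\lesssim\sqrt n/p$, which is at most $1/4$ once $c_1p\ge\sqrt n$ for a small enough universal $c_1$, whence $\tfrac{(1-\mu_{\min})^2}{2\mu_{\min}}\ge\tfrac{9}{32\mu_{\min}}\gtrsim p/\sqrt n$ and $\Ldiv(\estprec,I_p)\gtrsim 1/\sqrt n\ge c_1/(2\sqrt n)$ after adjusting constants. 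Only a single small eigenvalue of $\estprec$ is used; this is exactly the mechanism isolated in Section~\ref{sec-spectral}, namely that the sign constraint inflates the top eigenvalue of $\estcov$ through $S_+$.

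Finally, the heart of the argument is the concentration bound $\sum_{j\ne k}(S_{jk})_+\gtrsim p^2/\sqrt n$, which must hold with no upper bound on $p$. The key observation is that for each fixed $j$, conditionally on the $j$‐th column $X_{\cdot j}$, the variables $\{S_{jk}\}_{k\ne j}$ are i.i.d.\ $\cn(0,S_{jj}/n)$, so $T_j:=\sum_{k\ne j}(S_{jk})_+$ is a sum of $p-1$ i.i.d.\ nonnegative sub‐Gaussian variables with conditional mean $(p-1)\sqrt{S_{jj}/(2\pi n)}$. On the event $\{S_{jj}\ge1/2\}$, which fails with probability $\le e^{-cn}$ by a $\chi^2$ tail bound, this mean is $\gtrsim p/\sqrt n$, and standard sub‐Gaussian concentration for the lower tail of $T_j$ given $X_{\cdot j}$ yields $T_j\ge\tfrac12\,\E[T_j\mid X_{\cdot j}]$ off an event of conditional probability $\le e^{-c'p}$. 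A union bound over $j=1,\dots,p$ of these two failure events then makes $T_j\gtrsim p/\sqrt n$ hold for all $j$ simultaneously with probability at least $1-3p\exp(-c_2(n\land p))$, so $\sum_{j\ne k}(S_{jk})_+=\sum_j T_j\gtrsim p^2/\sqrt n$ on that event, completing the argument. I expect this last step to be the main obstacle: $\sum_{j\ne k}(S_{jk})_+$ is a nonnegative quadratic functional of the Gaussian matrix $X$, not Lipschitz, and since $p$ may be arbitrarily large relative to $n$ one cannot fall back on operator‐norm control of $X$; the columnwise conditioning, which linearizes the functional into conditionally i.i.d.\ sums, is precisely what makes exponential concentration — and the exact form $1-3p\exp(-c_2(n\land p))$ — available.
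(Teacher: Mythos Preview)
Your proposal is correct and follows essentially the same route as the paper: reduce to $\truecov=I_p$ by scale invariance, use the KKT conditions and entrywise nonnegativity of inverse $M$-matrices to get $\estcov\ge S_+$, show each row sum of $S_+$ is $\gtrsim p/\sqrt n$ by the columnwise-conditioning device you describe (this is exactly the paper's Lemma~\ref{lem-fixed-i}), and feed the resulting bound on $\lambda_{\max}(\estcov)$ into the spectral representation~\eqref{eq-goodness-of-fit}. The only cosmetic difference is that the paper packages the eigenvalue step as Theorem~\ref{thm-inconsistency} and bounds $\lambda_{\max}(\estcov)\ge\lambda_{\max}(S_+)\ge\min_k\sum_j(S_+)_{jk}$ via Perron--Frobenius, whereas you use the Rayleigh quotient with $\mathbf 1$; since both arguments then union-bound over all $p$ rows, they coincide.
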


\section{Spectral Norm: Suboptimality}\label{sec-spectral}
In this section, we prove a negative result which implies that $\estprec$ and $\estcov$ can be suboptimal for estimating spectral quantities of $\trueprec$ and $\truecov$ respectively. Consider the case when $\truecov = I_p$ and consider estimation of the top eigenvalue $\lambda_{\max}(\truecov) = 1$. The performance of the sample covariance matrix $S$ is well understood. Indeed, in the asymptotic setting $p/n \rightarrow \alpha > 0$, \citet{geman1980limit} proved that 
\begin{align*}
\lambda_{\max}(S)
\to (1+\sqrt{\alpha})^2,
\end{align*}
in probability as $n\to \infty$. This implies that $S$ is inconsistent for the estimation of $\lambda_{\max}(\truecov)$ when $p/n$ converges to a positive constant. Our next result proves that $\estcov = \estprec^{-1}$ is also inconsistent for estimating $\lambda_{\max}(\truecov)$ and, more interestingly, its performance is \textit{substantially worse} compared to $S$. Specifically, in the same asymptotic setting where $p/n \rightarrow \alpha > 0$, we have
\begin{equation}\label{eq-muchworse}
    \lambda_{\max}(\estcov) \rightarrow \infty
\end{equation}
in probability as $n \rightarrow \infty$. Thus the introduction of the sign constraints make the resulting covariance matrix estimator $\estcov$ much worse compared to $S$ for estimating the principal eigenvalue. This should be contrasted with the high-dimensional minimax optimality results from the previous section in the symmetrized Stein loss. 

%We now turn to estimating $\truecov$ in the spectral norm. Unlike the situation for the symmetrized Stein loss, high-dimensional consistency does not hold for the extreme eigenvalues of $\estprec$: the top eigenvalue of the covariance estimate $\estcov = \estprec^{-1}$ is severely positively biased in the null case $\truecov=I_p$. In the high-dimensional setting, $S$ poorly estimates the spectrum of $\truecov$. We show that for the top eigenvalue, $\estcov$ is substantially worse. 

%%%%%%%%%%%%%%%%%%
%%%%%%%%%%%%%%%%%%

\begin{theorem}\label{thm-inconsistency} Suppose $\truecov = I_p$ and $p\ge 17$. Then 
\begin{align}\label{eq-lambdamax}
\lambda_{\max}(\estcov)\ge 1+c_1 \frac{p}{\sqrt{n}},
\end{align}
with probability at least $1-3p\exp\left(-c_2(n\land p)\right)$, for some universal positive constants $c_1, c_2$. %c_0 = \sqrt{\frac{1}{512\pi}}
\end{theorem}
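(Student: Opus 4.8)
The plan is to bound $\lambda_{\max}(\estcov)$ from below through the entry-wise positive part $S_+$ of the sample covariance, defined by $(S_+)_{jk}:=\max(S_{jk},0)$. The first-order optimality conditions for the strictly convex program~\eqref{eq-log-det-div-min}, whose minimizer $\estprec$ is positive definite (the objective is finite only when $\det\aprec>0$), give $(S-\estcov)_{jj}=0$ and $(S-\estcov)_{jk}\le 0$ for $j\ne k$, so that $\estcov_{jj}=S_{jj}$ and $\estcov_{jk}\ge S_{jk}$ for all $j,k$. Since $\estprec$ is a nonsingular symmetric $M$-matrix, its inverse $\estcov$ is entry-wise nonnegative \citep[see][Chap.~6]{berman1994nonnegative}, whence $\estcov_{jk}\ge\max(S_{jk},0)=(S_+)_{jk}$ for every $j,k$. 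As $\mathbf 1\ge 0$ entry-wise, the Rayleigh quotient at $\mathbf 1$ gives
\begin{equation*}
\lambda_{\max}(\estcov)\ \ge\ \frac{\mathbf 1^\TT\estcov\,\mathbf 1}{p}\ \ge\ \frac{\mathbf 1^\TT S_+\,\mathbf 1}{p}
\end{equation*}
(equivalently $\lambda_{\max}(\estcov)\ge\lambda_{\max}(S_+)$ by Perron-Frobenius, since $0\le S_+\le\estcov$ entry-wise). It therefore suffices to prove $p^{-1}\mathbf 1^\TT S_+\mathbf 1\ge 1+c_1 p/\sqrt n$ with probability at least $1-3p\exp(-c_2(n\land p))$.

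Using $(S_{jk})_+=\tfrac12(S_{jk}+|S_{jk}|)$ and $\sum_{j\ne k}S_{jk}=\mathbf 1^\TT S\mathbf 1-\operatorname{tr}(S)$,
\begin{equation*}
\mathbf 1^\TT S_+\mathbf 1\ =\ \tfrac12\operatorname{tr}(S)+\tfrac12\,\mathbf 1^\TT S\,\mathbf 1+\tfrac12\sum_{j\ne k}|S_{jk}|.
\end{equation*}
Under $\truecov=I_p$ we have $\operatorname{tr}(S)=\tfrac1n\chi^2_{np}$ and $\mathbf 1^\TT S\mathbf 1=\tfrac pn\chi^2_n$, each with mean $p$, so the standard lower tail $\PP(\chi^2_m\le m(1-s))\le e^{-ms^2/4}$ shows $\tfrac1{2p}\operatorname{tr}(S)+\tfrac1{2p}\mathbf 1^\TT S\mathbf 1\ge 1-\tfrac{c_1}2\cdot\tfrac p{\sqrt n}$ with probability at least $1-2\exp(-c_2(n\land p))$; the slack is genuinely of smaller order than $p/\sqrt n$ once $p\ge 17$, which is the only place the numerical constant enters, and which matters only in the regime $p\lesssim\sqrt n$ where $n\land p=p$.

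The work is in showing $\tfrac1{2n}\sum_{j\ne k}|X_j^\TT X_k|\gtrsim p^2/\sqrt n$ with probability at least $1-p\exp(-c_2(n\land p))$, where $X_1,\dots,X_p\in\R^n$ are the columns of $X$; the mean is of the right order since $\E|X_j^\TT X_k|=\sqrt{2/\pi}\,\E\|X_k\|\ge c_0\sqrt n$ for a universal $c_0>0$. This is the main obstacle: $X\mapsto\sum_{j\ne k}|X_j^\TT X_k|$ carries no useful Lipschitz constant in $X$, so Gaussian concentration does not apply directly. I would instead exploit independence inside matchings. Fix a proper edge-colouring of the complete graph $K_p$ into at most $p$ matchings $M_1,\dots,M_p$, each of size at least $(p-1)/2$. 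Within a single matching the variables $\{|X_j^\TT X_k|:\{j,k\}\in M_\ell\}$ involve pairwise-disjoint columns, hence are independent, each nonnegative with mean $\ge c_0\sqrt n$, second moment $\E[(X_j^\TT X_k)^2]=n$, and sub-exponential norm $O(\sqrt n)$; Bernstein's inequality then yields $\sum_{\{j,k\}\in M_\ell}|X_j^\TT X_k|\ge\tfrac12|M_\ell|c_0\sqrt n\gtrsim p\sqrt n$ with probability at least $1-e^{-cp}$, and a union bound over the $\le p$ matchings gives $\sum_{j\ne k}|X_j^\TT X_k|\gtrsim p^2\sqrt n$ with probability at least $1-pe^{-cp}$. (A less elementary alternative is to write $X_j=\|X_j\|U_j$, condition on the column norms---all of order $\sqrt n$ off an event of probability $pe^{-cn}$---and apply concentration of measure on the product of unit spheres to the $O(p^{3/2})$-Lipschitz map $(U_j)_j\mapsto\sum_{j\ne k}|U_j^\TT U_k|$.)

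Combining the three displays and choosing $c_1$ small enough relative to the implicit constant of the last bound minus the slack of the second paragraph---which is where $p\ge 17$ is needed---yields $p^{-1}\mathbf 1^\TT S_+\mathbf 1\ge 1+c_1 p/\sqrt n$ off an event of probability at most $(2+p)\exp(-c_2(n\land p))\le 3p\exp(-c_2(n\land p))$, as required. Everything outside the matching argument of the third paragraph is routine bookkeeping with $\chi^2$ tails.
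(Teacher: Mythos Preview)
Your argument is correct and takes a genuinely different route from the paper's. Both start identically by establishing $\estcov\ge S_+$ entry-wise, but the paper then bounds $\lambda_{\max}(\estcov)\ge\lambda_{\max}(S_+)\ge\min_k\sum_j(S_+)_{jk}$ via the minimum-row-sum inequality for nonnegative matrices, and controls a \emph{single} row sum $\sum_j(S_+)_{pj}$ by conditioning on the $p$th column of $X$: given $X_{\cdot p}$, the entries $S_{pj}$, $j<p$, are i.i.d.\ centered Gaussians with variance $n^{-2}\|X_{\cdot p}\|^2$, so $(p-1)^{-1}\sum_{j<p}(S_{pj})_+$ is a $(p-1)^{-1/2}$-Lipschitz function of a standard Gaussian vector and concentrates around its mean $\approx c/\sqrt n$ by Gaussian isoperimetry; a union bound over the $p$ rows then supplies the factor $p$ in the failure probability. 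You instead pass to the \emph{average} row sum via the Rayleigh quotient at $\mathbf 1$, split it into two chi-squared pieces and the off-diagonal $\ell_1$ mass, and control the latter by edge-colouring $K_p$ into $\le p$ matchings so that Bernstein applies to independent sub-exponential summands within each matching; your factor $p$ comes from the union over matchings. The paper's conditioning trick is slicker---one freeze reduces everything to Lipschitz concentration---and yields the stronger intermediate statement that every row sum of $S_+$ is large, whereas your Rayleigh route only sees the total. Your approach, however, avoids Gaussian Lipschitz concentration altogether and makes the combinatorial origin of the $p$ factor transparent; the two are comparable in length once the sub-exponential norm $\||X_j^\TT X_k|\|_{\psi_1}=O(\sqrt n)$ is verified.
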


Note that when $p/n \rightarrow \alpha > 0$, the right hand side of \eqref{eq-lambdamax} diverges to $\infty$ which proves \eqref{eq-muchworse}. 

The proof of Theorem \ref{thm-inconsistency} is crucially based on following dual formulation to the constrained MLE~\eqref{eq-log-det-div-min} \citep[see, e.g.,][]{slawski2015estimation}:
\begin{align}\label{eq-dual}
\estcov = \argmax_{\substack{\Sigma\in \psd \\ \Sigma\ge S,\, D_\Sigma = D_S}} \det \Sigma,
\end{align}
where the second constraint $\Sigma\ge S$ is an entry-wise inequality. This fact and the well-known observation that the inverse of an $M$-matrix is entry-wise nonnegative \citep[see][Chap. 6, Property $N_{38}$]{berman1994nonnegative} together imply that $\estcov_{jk}\ge S_{jk}\lor 0$ for all $j, k$. This allows us to prove Theorem \ref{thm-inconsistency} by a careful analysis of the entry-wise positive part matrix $S_+$ of $S$. 

%It is well known that the inverse of an $M$-matrix is entry-wise nonnegative, i.e. $\estcov\ge 0$. Since the off-diagonal entries of $\truecov=I_p$ are all zero, it is clear that the sign constraints bias the covariance estimate. To prove Theorem~\ref{thm-inconsistency}, we leverage the dual formulation to the constrained MLE~\eqref{eq-log-det-div-min}, which is the following constrained max-determinant problem \citep{slawski2015estimation}:
%\begin{align}\label{eq-dual}
%\estcov = \argmax_{\substack{\Sigma\in \psd \\ \Sigma\ge S,\, D_\Sigma = D_S}} \det \Sigma,
%\end{align}
%where the second constraint $\Sigma\ge S$ is an entry-wise inequality. These entry-wise inequalities imply $\estcov_{jk}\ge S_{jk}\lor 0$, which crucially enters the proof of Theorem~\ref{thm-inconsistency}.

%Consider an asymptotic setting $p/n \to \alpha > 0$ with identity covariance $\truecov = I_p$. \cite{geman1980limit} showed 
%\begin{align}
%\lambda_{\max}(S)
%\to (1+\sqrt{\alpha})^2,
%\end{align}
%almost surely as $n\to \infty$. Theorem~\ref{thm-inconsistency} states that the gap $\lambda_{\max}(\estcov) - 1$ for the dual $\estcov$ is of order at least $\sqrt{\alpha p}$ with high probability, implying
%\begin{align}
%\lambda_{\max}(\estcov)
%\to \infty,
%\end{align}
%in probability as $n\to \infty$. In this setting, $\estcov$ is much worse than the sample covariance matrix $S$ for estimating the top eigenvalue $\lambda_{\max}(\truecov)$.

Theorem \ref{thm-inconsistency} implies minimax suboptimality of $\estcov$ in the spectral norm $\vertiii{\cdot}_2$. To see this, note that, for every $K > 0$, the sample covariance $S$ satisfies the worst case risk bound
\[
\sup_{\substack{\truecov\in \psd \\ \lambda_{\max}(\truecov) \le K}} \E\vertiii{S-\truecov}_2
\le CK\left(\sqrt{\frac{p}{n}} + \frac{p}{n}\right),
\]
where $C > 0$ is a universal constant \citep[see, e.g.,][Example 6.3]{wainwright2019high}. By contrast, Theorem~\ref{thm-inconsistency} implies 
\[
\sup_{\substack{\trueprec\in \M \\ \lambda_{\max}(\truecov)\le K}}\E\vertiii{\estcov - \truecov}_2
\ge \E_{\truecov=KI_p}\left[\lambda_{\max}(\estcov) - K\right]
\ge cK\frac{p}{\sqrt{n}}%(1-3p\exp(-c_2n\land p))
\]
for $n \gtrsim \log p$. Hence $\estcov$ is minimax suboptimal in the spectral norm for most choices of $p$ and $n$.

Theorem~\ref{thm-inconsistency} also implies inconsistency in spectral norm for the precision matrix. Since $\lambda_{\max}(\estcov) = \frac{1}{\lambda_{\min}(\estprec)}$, we have
\begin{align*}
\lambda_{\min}(\estprec)\le \frac{1}{1+c_1 \alpha \sqrt{n}},
\end{align*}
with probability at least $1-3p\exp\left(-c_2(\alpha \land 1) n\right)$, where $\alpha = p/n$. As $n\to \infty$, the upper bound approaches zero: the minimum eigenvalue of $\estprec$ poorly estimates that of $\trueprec$. We record this as a separate corollary.

\begin{corollary}
Suppose $\truecov = I_p$ and $p = \alpha n \ge 17$. Then %In the setting of Theorem~\ref{thm-inconsistency},
\begin{align}
\vertiii{\estprec - \trueprec}_2
%= \max_j |\hat\lambda_j - 1| 
\ge 1-\lambda_{\min}(\estprec)
%\ge \frac{c_0\sqrt{\gamma p}}{1+c_0\sqrt{\gamma p}} 
\ge \frac{1}{1+1/(c_1\alpha\sqrt{n})},
%\ge 1 - \frac{1}{(1+1/(c_0\sqrt{\gamma p}))^2} 
%\to 1,
\end{align}
with probability at least $1-3\alpha ne^{-c_2n(\alpha \land 1)}$. Hence, $\estprec$ is inconsistent in the spectral norm as $n\to\infty$ and $p/n\to \alpha$.
\end{corollary}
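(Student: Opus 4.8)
The plan is to read the corollary directly off Theorem~\ref{thm-inconsistency} by converting the statement about $\lambda_{\max}(\estcov)$ into one about $\lambda_{\min}(\estprec) = 1/\lambda_{\max}(\estcov)$ and then bounding the spectral norm from below. Since $\truecov = I_p$ we have $\trueprec = I_p$, and the symmetric matrix $\estprec - \trueprec = \estprec - I_p$ has eigenvalues $\lambda_j(\estprec) - 1$; hence $\vertiii{\estprec - \trueprec}_2 = \max_j |\lambda_j(\estprec)-1| \ge |\lambda_{\min}(\estprec)-1| \ge 1 - \lambda_{\min}(\estprec)$, which is the first claimed inequality (trivial when the right-hand side is negative). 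So it remains to lower bound $1 - \lambda_{\min}(\estprec)$.

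First I would apply Theorem~\ref{thm-inconsistency} with $p = \alpha n \ge 17$: on an event $E$ with $\PP(E) \ge 1 - 3p\exp(-c_2(n\land p)) = 1 - 3\alpha n\exp(-c_2 n(\alpha\land 1))$ we have $\lambda_{\max}(\estcov) \ge 1 + c_1 p/\sqrt n = 1 + c_1\alpha\sqrt n$, using $p/\sqrt n = \alpha\sqrt n$. Since $\estprec = \estcov^{-1}$ is positive definite, on $E$ this gives $\lambda_{\min}(\estprec) = 1/\lambda_{\max}(\estcov) \le (1+c_1\alpha\sqrt n)^{-1} < 1$, and therefore
\[
1-\lambda_{\min}(\estprec) \ge 1 - \frac{1}{1+c_1\alpha\sqrt n} = \frac{c_1\alpha\sqrt n}{1+c_1\alpha\sqrt n} = \frac{1}{1+1/(c_1\alpha\sqrt n)},
\]
which is the stated bound, valid on $E$, i.e.\ with probability at least $1 - 3\alpha n e^{-c_2 n(\alpha\land 1)}$.

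For the final assertion, as $n\to\infty$ with $p/n\to\alpha$ held fixed the right-hand side above tends to $1$ while $\PP(E^c) \le 3\alpha n\exp(-c_2 n(\alpha\land 1)) \to 0$, so $\vertiii{\estprec-\trueprec}_2$ is bounded below by a quantity approaching $1$ with probability tending to one and hence does not converge to zero in probability; this is the claimed inconsistency. I do not expect a genuine obstacle: all the substance sits in Theorem~\ref{thm-inconsistency}, and the only points needing a moment's care are the one-sided character of the operator-norm step (we only assert $\vertiii{\cdot}_2 \ge 1 - \lambda_{\min}$, since a possibly large $\lambda_{\max}(\estprec)$ would only strengthen it) and the bookkeeping identifying $n\land p$ and $c_1p/\sqrt n$ under the substitution $p = \alpha n$.
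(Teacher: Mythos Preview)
Your proof is correct and follows exactly the paper's own argument, which is sketched in the paragraph immediately preceding the corollary: use $\lambda_{\min}(\estprec)=1/\lambda_{\max}(\estcov)$ together with Theorem~\ref{thm-inconsistency}, then substitute $p=\alpha n$ and rewrite $1-\tfrac{1}{1+c_1\alpha\sqrt n}$ as $\tfrac{1}{1+1/(c_1\alpha\sqrt n)}$. The bookkeeping identifying $n\land p=n(\alpha\land 1)$ and $3p=3\alpha n$ is handled correctly.
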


\section{Discussion}\label{sec-discussion}

In this paper, we establish the possibility of tuning-free estimation of a large precision matrix $\trueprec$ based only on the knowledge that it is an $M$-matrix i.e., it has nonpositive off-diagonal entries. Our main contribution is to identify a loss---namely, the symmetrized Stein loss---in which $\estprec$ is both high-dimensionally consistent and minimax optimal. As the form \eqref{eq-goodness-of-fit} for the symmetrized Stein loss suggests, the quantity $\Ldiv(\aprec, \trueprec)$ is an average measure of closeness across all of the eigenvalues. The estimator $\estprec$ is inadequate, however, for estimating the extreme eigenvalues when $p$ is large relative to $n$, and our other main result establishes that $\estcov$ is minimax suboptimal in the spectral norm, even relative to the usual sample covariance matrix $S$. For the remainder of this section, we discuss some aspects that are naturally connected to our main results.

\textbf{Misspecification.} In practice, the assumption that all partial correlations are nonnegative may not hold exactly. \citet{slawski2015estimation} empirically evaluate the impact of misspecification on the estimator $\estprec$, defining the {\sl attractive part} $\aprec^\bullet\in \M$ of the population precision $\trueprec\not\in\M$ as the population analogue of the Bregman projection~\eqref{eq-log-det-div-min} with $S$ replaced by $\truecov$. Under the symmetrized Stein loss, a straightforward extension of Theorem~\ref{thm-symm-kl} shows that $\estprec$ targets the attractive part $\aprec^\bullet$ even under misspecification.

\begin{theorem}\label{thm-misspecification} Let $S = n^{-1}X^\TT X$ denote the sample covariance based on $X\in \R^{n\times p}$ with i.i.d.\ $\cn(0, \truecov)$ rows. Define the attractive part $\aprec^\bullet\in\M$ of the model as 
\begin{align*}
    \aprec^\bullet
    &:= \argmin_{\aprec \in \M} \left\{\<\aprec, \truecov\>
   - \log \det \aprec \right\}.
\end{align*}
For all $n\ge c_1\gamma^2(\acov^\bullet)\log p$, the MLE $\estprec$ defined in~\eqref{eq-log-det-div-min} satisfies
\begin{align*}
\Ldiv(\estprec, \aprec^\bullet)
&\le c_2\gamma(\acov^\bullet)\sqrt{\frac{\log p}{n}},
\end{align*}
with probability at least $1-c_3p^{-2}$. Here $c_1,c_2,c_3$ are universal positive constants.
\end{theorem}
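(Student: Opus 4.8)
The plan is to reprise the proof of Theorem~\ref{thm-symm-kl} almost verbatim, with the true precision matrix $\trueprec$ replaced throughout by its attractive part $\aprec^\bullet$. First I would check that $\aprec^\bullet$ is genuinely well defined: since $\trueprec = (\truecov)^{-1}$ exists, $\truecov$ is strictly positive definite, so the population ``sample covariance'' $\truecov$ satisfies the hypotheses of the existence result of \citet{slawski2015estimation} (positive diagonal, off-diagonal correlations strictly below one). Hence the minimizer $\aprec^\bullet$ exists uniquely and $\acov^\bullet := (\aprec^\bullet)^{-1}\succ 0$, so $\gamma(\acov^\bullet) < \infty$. It is also convenient to observe, via the dual formulation \eqref{eq-dual} with $S$ replaced by $\truecov$, that $\acov^\bullet$ has the same diagonal as $\truecov$, which makes the usual diagonal normalization in the concentration step legitimate.

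The heart of the argument is a basic inequality obtained from two first-order optimality conditions. Because $\estprec$ minimizes $\aprec\mapsto\langle\aprec, S\rangle - \log\det\aprec$ over the convex cone $\M$, its variational inequality reads $\langle S - \estcov, \aprec - \estprec\rangle \ge 0$ for every $\aprec\in\M$; likewise $\aprec^\bullet$ minimizes $\aprec\mapsto\langle\aprec,\truecov\rangle - \log\det\aprec$ over $\M$, so $\langle\truecov - \acov^\bullet, \aprec - \aprec^\bullet\rangle \ge 0$ for every $\aprec\in\M$. Taking $\aprec = \aprec^\bullet$ in the first and $\aprec = \estprec$ in the second, decomposing $S - \estcov = (S-\truecov) + (\truecov - \acov^\bullet) + (\acov^\bullet - \estcov)$, and using the identity $\langle\acov^\bullet - \estcov,\ \aprec^\bullet - \estprec\rangle = -2p\,\Ldiv(\estprec,\aprec^\bullet)$ coming from \eqref{eq-divergence-loss-defn}, the cross term $\langle\truecov - \acov^\bullet,\ \aprec^\bullet - \estprec\rangle$ has the favorable sign $\le 0$ by the second variational inequality and can be dropped, leaving
\[
2p\,\Ldiv(\estprec, \aprec^\bullet) \;\le\; \langle S - \truecov,\ \aprec^\bullet - \estprec\rangle \;\le\; \|S-\truecov\|_\infty\,\|\aprec^\bullet - \estprec\|_1 ,
\]
where $\|\cdot\|_\infty$ and $\|\cdot\|_1$ denote the entrywise sup and $\ell_1$ norms. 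This is exactly the basic inequality of Theorem~\ref{thm-symm-kl} with $\trueprec$ replaced by $\aprec^\bullet$.

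From here the remainder of the proof of Theorem~\ref{thm-symm-kl} applies unchanged: standard Gaussian concentration gives $\|S-\truecov\|_\infty \lesssim \sqrt{\log p/n}$ with probability at least $1 - c_3 p^{-2}$ once $n\gtrsim\log p$ (after the diagonal normalization, which is legitimate by the scale invariance \eqref{eq-scale-invariance} of $\Ldiv$ and of $\gamma$); the entrywise $\ell_1$ distance $\|\aprec^\bullet - \estprec\|_1$ is controlled in terms of $\Ldiv(\estprec,\aprec^\bullet)$, $p$, and $\gamma(\acov^\bullet)$ by the same $M$-matrix structural lemma (every matrix in $\M$ is, up to diagonal scaling, diagonally dominant), which applies because both $\aprec^\bullet$ and $\estprec$ lie in $\M$; and a self-bounding argument then yields $\Ldiv(\estprec,\aprec^\bullet) \le c_2\gamma(\acov^\bullet)\sqrt{\log p/n}$ on the event $n \ge c_1\gamma^2(\acov^\bullet)\log p$. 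The only genuinely new ingredient relative to Theorem~\ref{thm-symm-kl} is the second variational inequality: in the well-specified case the population gradient $\truecov - \trueprec^{-1}$ vanishes, so that condition held with equality and $\aprec^\bullet = \trueprec$, whereas under misspecification $\aprec^\bullet$ is only a constrained minimizer. I expect the sole (and mild) obstacle to be verifying that this inequality, tested at $\estprec\in\M$, contributes the cross term with the correct nonpositive sign; everything else—existence of $\estprec$, the dual formulation \eqref{eq-dual}, the concentration bound, and the structural lemma—never used $\trueprec\in\M$ and so carries over directly.
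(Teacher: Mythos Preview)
Your proposal is correct and follows essentially the same approach as the paper: derive the basic inequality $\Ldiv(\estprec,\aprec^\bullet)\le\frac{1}{2p}\langle S-\truecov,\aprec^\bullet-\estprec\rangle$ by combining the first-order optimality conditions for $\estprec$ (tested at $\aprec^\bullet\in\M$) and for $\aprec^\bullet$ (tested at $\estprec\in\M$), then repeat the proof of Theorem~\ref{thm-symm-kl} with $\trueprec$ replaced by $\aprec^\bullet$. Your decomposition of $S-\estcov$ is just a different bookkeeping of the paper's direct addition of the two inequalities; your observation that $D_{\acov^\bullet}=D_{\truecov}$ (from the dual \eqref{eq-dual}) is a detail the paper leaves implicit but which is indeed what makes the diagonal normalization and the $\gamma(\acov^\bullet)$-dependence go through cleanly.
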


\textbf{Estimating the correlation matrix.} %\cite{rothman2008sparse} proposed modifying the $\ell_1$-regularized estimate, substituting the sample correlation matrix $R = D_S^{-1/2}SD_{S}^{-1/2}$ in place of $S$ to obtain better rates when an estimate of the diagonal scaling is not needed. Since the cone $\M$ is closed under scaling $\aprec\mapsto D\aprec D$ by a positive diagonal matrix $D\in \cd$, the precision matrix $\trueprec$ is an $M$-matrix if and only if the inverse correlation matrix $\truepcor = D_{\truecov}^{1/2}\trueprec D_{\truecov}^{1/2}$ is an $M$-matrix. In particular, it is natural to define an estimate of the inverse correlation matrix
One may also be interested, under the same nonnegative partial correlations assumption, in estimating the population correlation matrix $\truecor := D_{\truecov}^{-1/2}\truecov D_{\truecov}^{-1/2}$ and its inverse $\truepcor = (\truecor)^{-1} = D_{\truecov}^{1/2}\trueprec D_{\truecov}^{1/2}$ (here $D_{\truecov}$ denotes the diagonal matrix whose diagonal is equal to that of $\truecov$). It is natural to use $\estpcor := D_S^{1/2}\estprec D_S^{1/2}$ to estimate $\truepcor$. One can check that $\estpcor$ satisfies 
\begin{equation*}%\label{eq-correlation-mod}
   \estpcor = \argmin_{\apcor \in \M} \left\{\<\apcor, R\>
   - \log \det \apcor \right\}.
\end{equation*}
because the optimization problem is equivariant with respect to diagonal scaling \citep[see][Lemma 2.5]{lauritzen2019maximum}. 
%\sout{Since the optimization problem is equivariant with respect to diagonal scaling, this Bregman projection of $R$ is equivalent to scaling the original estimator based on $S$, i.e. t}The two estimators are related by $\estpcor = D_{S}^{1/2}\estprec D_{S}^{1/2}$ \citep[see][Lemma 2.5]{lauritzen2019maximum}. 
The high-dimensional consistency result of Theorem~\ref{thm-symm-kl} also holds for $\estpcor$ as an estimator of the inverse correlation matrix $\truepcor$. This follows from an argument analogous to the proof of Theorem \ref{thm-symm-kl}, with the tail bound for $\|S - \truecov\|_{\infty}$ replaced by the corresponding tail bound on $\|R - \truecor\|_{\infty}$ \citep[see, e.g.,][Lemma 19]{sun2013sparse}.

\textbf{Non-Gaussian observations.} We state Theorems~\ref{thm-symm-kl} and~\ref{thm-misspecification} under the Gaussian assumption for simplicity and to remain consistent with other results in this paper. In general, the upper bound depends on the tail behavior of $\|S-\truecov\|_\infty$---see Lemma~\ref{lem-maximal-inequality}. A similar result holds when the rows of $X$ are i.i.d.\ with $\sigma$-sub-Gaussian components. As \citet{ravikumar2011high} note, estimators of the form~\eqref{eq-log-det-div-min} are motivated via maximum likelihood yet remain sensible for non-Gaussian $X$. For general $X$, the estimator $\estprec$ is motivated as a Bregman projection of $S$ with respect to the Stein loss.

\textbf{Modifying $\estprec$.} Although we focus on properties of the tuning-free estimator $\estprec$, additional processing such as thresholding $\estprec$ or pre-processing the sample covariance $S$ may produce an estimator that is high-dimensionally consistent in the spectral norm. The tuning-free covariance estimate $\estcov$ may also prove more useful for spectral analysis when the true covariance is a dense matrix. For instance, in the equicorrelation model where $\truecov$ has unit diagonal and every off-diagonal entry equal to $r \in (0,1)$, the entry-wise inequalities in \eqref{eq-dual} may introduce less bias.

\textbf{Related problems.} \cite{karlin1983m}, who pioneered the connection between $M$-matrices and $\mtptwo$, also considered repulsive models where the covariance matrix $\truecov\in \M$ has nonpositive off-diagonal, in which case all marginal and partial correlations are nonpositive. This also defines an interesting model class which may similarly simplify estimation in high-dimensional problems. Note, however, that the constraint set $\{\aprec : \aprec^{-1}\in \M\}$ of symmetric inverse-$M$ matrices is non-convex, presenting potential difficulties for maximum likelihood estimation.
% are there convex cones in \psd worth mentioning here?
%{\color{red}mention non-convexity of IM-matrices, estimation H-matrices and constraining factor width (start with other constraint sets and qualitative dependence models)} % ({\color{red}\textbf{I think you told me that maximum likelihood under this constraint leads to a non-convex optimization problem. If this is true, consider mentioning it. I think it is an interesting observation.}})

\textbf{Connection to shape-restricted regression.} As a subset of the $p\times p$ symmetric positive-semidefinite matrices, the $M$-matrices $\M$ form a closed, convex cone determined only by sign constraints. The sign constraints on the precision matrix are analogous to a shape constraint in shape-restricted regression, enabling the use of likelihood techniques without explicit regularization. In particular, one can define the Bregman projection $\estprec$ of $S$ onto $\M$ \citep{slawski2015estimation, lauritzen2019maximum}. This work thus represents a first foray into the study of shape constraints for high-dimensional precision matrix estimation, inspired by results on regularization-free prediction in high-dimensional linear models via nonnegative least squares \citep{slawski2013non}. See \citet{groeneboom2014nonparametric} for a general introduction to shape-restricted regression and \citet{guntuboyina2018nonparametric} for a recent survey with a focus on risk bounds.

\section{Proofs}\label{sec-proofs}

\subsection{Proofs of Theorems~\ref{thm-symm-kl} and~\ref{thm-misspecification}}

We first introduce two lemmas needed in the proof of Theorem~\ref{thm-symm-kl}. Following previous results on sparse precision matrix estimation \citep[see, e.g.,][]{cai2011constrained, ravikumar2011high, sun2013sparse}, we rely on concentration of the entry-wise maximum deviation $\|S-\truecov\|_\infty = \max_{j,k} |S_{jk}-\truecov_{jk}|$ in the high-dimensional regime. A key technical tool in our analysis is the following lemma, which follows from an application of Bernstein's inequality. %The quoted result holds for $X\in \R^{n\times p}$ with i.i.d.\ sub-Gaussian rows.

\begin{lemma}\label{lem-maximal-inequality} \cite[Lemma 6]{jankova2015confidence} Suppose $X\in \R^{n\times p}$ has i.i.d.\ $\cn(0, \truecov)$ rows and let $S = n^{-1}X^\TT X$. 
For any $t > 2$,
\begin{align*}
    \PP\left(
    \|S-\truecov\|_\infty \ge 2\|\truecov\|_\infty\left[\sqrt{\frac{2t\log p}{n}} +\frac{t\log p}{n}\right]
    \right)
    &\le \frac{2}{p^{t-2}}.
\end{align*}
\end{lemma}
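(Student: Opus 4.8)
The plan is to prove the bound entrywise and then take a union bound over the (at most) $p^2$ entries of $S-\truecov$. Fix $j,k\in\{1,\dots,p\}$ and write
\[
S_{jk}-\truecov_{jk}=\frac1n\sum_{i=1}^n Z_i,\qquad Z_i:=X_{ij}X_{ik}-\truecov_{jk},
\]
so that the $Z_i$ are i.i.d., mean zero, and each is a centered product of two jointly Gaussian variables with $\E X_{ij}^2=\truecov_{jj}$, $\E X_{ik}^2=\truecov_{kk}$, $\E X_{ij}X_{ik}=\truecov_{jk}$. The first step is to certify that $Z_i$ is sub-exponential with the right scale. Writing $X_{ik}$ as a linear combination of $X_{ij}$ and an independent Gaussian, a direct moment-generating-function computation gives, for $\lambda$ small,
\[
\E e^{\lambda Z_i}=e^{-\lambda\truecov_{jk}}\bigl(1-2\lambda\truecov_{jk}-\lambda^2(\truecov_{jj}\truecov_{kk}-\truecov_{jk}^2)\bigr)^{-1/2},
\]
from which $\var(Z_i)=\truecov_{jj}\truecov_{kk}+\truecov_{jk}^2\le2\|\truecov\|_\infty^2$ and a Bernstein moment bound $\E|Z_i|^m\le\tfrac12 m!\,v^2 b^{m-2}$ for $m\ge2$ with $v^2\asymp\|\truecov\|_\infty^2$ and $b\asymp\|\truecov\|_\infty$, using $\truecov_{jj},\truecov_{kk},|\truecov_{jk}|\le\|\truecov\|_\infty$. (Alternatively one can avoid the MGF via the polarization identity $X_{ij}X_{ik}=\tfrac14[(X_{ij}+X_{ik})^2-(X_{ij}-X_{ik})^2]$: $n^{-1}\sum_i(X_{ij}\pm X_{ik})^2$ is a scaled $\chi^2_n$ variable with scale $\truecov_{jj}\pm2\truecov_{jk}+\truecov_{kk}$, the two scales summing to $2(\truecov_{jj}+\truecov_{kk})\le4\|\truecov\|_\infty$, so the Laurent--Massart $\chi^2$ tail bounds give a deviation of the form $\|\truecov\|_\infty[\sqrt{t\log p/n}+t\log p/n]$ for each piece at cost $e^{-t\log p}$ in probability.)

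The second step is routine: apply the classical Bernstein inequality
\[
\PP\!\left(\Bigl|\tfrac1n\sum_{i=1}^n Z_i\Bigr|\ge u\right)\le2\exp\!\left(-\frac{nu^2}{2(v^2+bu)}\right)
\]
and invert it. Using $\sqrt{a+b}\le\sqrt a+\sqrt b$, the exponent is at least $t\log p$ as soon as $u\ge v\sqrt{2t\log p/n}+2b\,t\log p/n$; calibrating the absolute constants so that $v\le2\|\truecov\|_\infty$ and $2b\le2\|\truecov\|_\infty$ then gives
\[
\PP\!\left(|S_{jk}-\truecov_{jk}|\ge2\|\truecov\|_\infty\Bigl[\sqrt{\tfrac{2t\log p}{n}}+\tfrac{t\log p}{n}\Bigr]\right)\le2p^{-t}.
\]
A union bound over the at most $p^2$ index pairs $(j,k)$ (only $p(p+1)/2$ are distinct, by symmetry of $S$ and $\truecov$) turns this into $\PP(\|S-\truecov\|_\infty\ge\cdots)\le2p^2\cdot p^{-t}=2p^{-(t-2)}$, which is the claim.

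The only genuinely delicate part is the first step: pinning down the Bernstein parameters $(v^2,b)$ — or, in the polarization route, the $\chi^2$ scales — with sharp enough constants that, after inverting Bernstein's inequality, the deviation comes out exactly as $2\|\truecov\|_\infty[\sqrt{2t\log p/n}+t\log p/n]$ with a clean exponent $t\log p$ (so that the union bound collapses to $p^{-(t-2)}$). Everything else is bookkeeping; the precise version of this computation is carried out in \cite[Lemma 6]{jankova2015confidence}, which we cite here rather than reproduce.
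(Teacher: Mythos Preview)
Your proposal is correct and follows essentially the same route as the paper: obtain the entrywise deviation bound $\PP(|S_{jk}-\truecov_{jk}|\ge 2\|\truecov\|_\infty[\sqrt{2x/n}+x/n])\le 2e^{-x}$ from \cite[Lemma 6]{jankova2015confidence}, then union-bound over the $\binom{p+1}{2}\le p^2$ index pairs with $x=t\log p$. You provide more detail than the paper on how the cited entrywise bound is actually established (Bernstein for the sub-exponential product $X_{ij}X_{ik}$, or polarization plus Laurent--Massart $\chi^2$ tails), but the structure of the argument is identical.
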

\begin{proof} Let $\alpha = e_j$ and $\beta = e_k$ denote the standard basis vectors. Lemma 6 of \cite{jankova2015confidence} provides
\begin{align*}
    \PP\left(
    \alpha^\TT(S-\truecov)\beta \ge 2\|\truecov\|_\infty\left[\sqrt{\frac{2x}{n}} +\frac{x}{n}\right]
    \right)
    &\le 2e^{-x}.
\end{align*}
Taking a union bound over $j\le k$ and setting $x = \log p^t$ yields the claim.
\end{proof}

The next lemma records a distinctive property of $M$-matrices, corresponding to the fact that $M$-matrices are generalized diagonally dominant \citep{plemmons1977m}.

\begin{lemma}\label{lem-m-matrix-dd} Every $M$-matrix $\aprec\in \M$ satisfies $\|\aprec\|_1 := \sum_{i,j}|\aprec_{ij}|\le 2\emph{tr}(\aprec)$.
\begin{proof}
Since $\aprec$ is symmetric PSD, there are vectors $\theta_1,\dots,\theta_p$ such that $\aprec_{ij} = \langle\theta_i, \theta_j\rangle$. Moreover, since $\aprec$ has nonpositive off-diagonal entries, $\langle\theta_i, \theta_j\rangle\le 0$ for $i\ne j$. Hence
\[
\|\aprec\|_1
= \sum_{i}\|\theta_i\|_2^2 - \sum_{i\ne j}\langle\theta_i, \theta_j\rangle
= 2\sum_{i}\|\theta_i\|_2^2 - \left\|\sum_{i}\theta_i\right\|_2^2
\le 2\sum_{i}\|\theta_i\|_2^2 = 2\text{tr}(\aprec). \qedhere
\]
\end{proof}
\end{lemma}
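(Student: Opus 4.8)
The plan is to use only the two defining properties of a matrix $\aprec \in \M$: it is positive semidefinite (so in particular every diagonal entry $\aprec_{ii} \ge 0$), and every off-diagonal entry satisfies $\aprec_{ij} \le 0$ for $i \ne j$. First I would split the $\ell_1$ norm into its diagonal and off-diagonal parts and use the sign information to remove the absolute values:
\[
\|\aprec\|_1 = \sum_i \aprec_{ii} + \sum_{i \ne j} |\aprec_{ij}| = \text{tr}(\aprec) - \sum_{i \ne j} \aprec_{ij}.
\]
So it remains only to show $-\sum_{i\ne j}\aprec_{ij} \le \text{tr}(\aprec)$, equivalently $\sum_{i,j}\aprec_{ij} \ge 0$.

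The second step is to recognize $\sum_{i,j}\aprec_{ij}$ as the quadratic form $\mathbf 1^\TT \aprec\, \mathbf 1$ of $\aprec$ at the all-ones vector $\mathbf 1 \in \R^p$, which is nonnegative precisely because $\aprec$ is PSD. Plugging this into the display above yields $\|\aprec\|_1 \le 2\,\text{tr}(\aprec)$. Equivalently, in the Gram representation $\aprec_{ij} = \langle\theta_i,\theta_j\rangle$, the bound $-\sum_{i\ne j}\langle\theta_i,\theta_j\rangle = \sum_i\|\theta_i\|_2^2 - \bigl\|\sum_i\theta_i\bigr\|_2^2 \le \sum_i\|\theta_i\|_2^2 = \text{tr}(\aprec)$ is just the nonnegativity of the squared norm $\bigl\|\sum_i\theta_i\bigr\|_2^2$; these are two phrasings of the same one-line argument.

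There is essentially no obstacle here; the only thing worth noting is that the correct test vector for the positive-semidefiniteness inequality is the all-ones vector, which is natural because pairing $\aprec$ against $\mathbf 1$ is exactly what turns the off-diagonal sign constraints into the claimed inequality. I would not need the stronger structural fact that every $M$-matrix is generalized diagonally dominant (Property $M_{34}$ in Berman and Plemmons, cited in the text), which would also imply the lemma, but only through an unnecessary diagonal-rescaling argument.
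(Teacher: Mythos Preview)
Your proof is correct and is essentially the same as the paper's: the paper writes $\aprec_{ij}=\langle\theta_i,\theta_j\rangle$ and uses $\bigl\|\sum_i\theta_i\bigr\|_2^2\ge 0$, which is exactly your inequality $\mathbf 1^\TT\aprec\,\mathbf 1\ge 0$ in Gram form, as you yourself note. The only cosmetic difference is that you first strip the absolute values and then invoke PSD at $\mathbf 1$, whereas the paper packages both steps into the single Gram identity.
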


An illustrative example is the one-parameter family of $p\times p$ symmetric matrices $A_x = (1-x)I_p + x{\bf 1}_p{\bf 1}_p'$ (where ${\bf 1}_p = \sum_{j=1}^p e_j$ is the all ones vector) with unit diagonal and every off-diagonal equal to $x$. Its eigenvalues are $1-x$ (with multiplicity $p-1$) and $1+(p-1)x$. Thus $A_x$ is PSD if and only if $x \in \left[-\frac{1}{p-1},1\right]$, whereas $A_x$ is an $M$-matrix if and only if $x \in \left[-\frac{1}{p-1},0\right]$. Finally, note $\|A_x\|_1 = p + p(p-1)|x|$ and $\text{tr}(A_x) = p$. This example shows Lemma~\ref{lem-m-matrix-dd} is tight. For general PSD matrices, the element-wise $\ell_1$-norm can be as large as $p$ times the trace, but for $M$-matrices it can be at most twice as large.

We are now ready to prove Theorem~\ref{thm-symm-kl}.

\begin{proof}[Proof of Theorem~\ref{thm-symm-kl}]
For any positive diagonal matrix $D\in \cd$, 
\begin{align*}
\Ldiv(\estprec(S), \trueprec)
&= \Ldiv(D\estprec(S)D, D\trueprec D) \\
&= \Ldiv(\estprec(D^{-1}SD^{-1}), D^{-1}\truecov D^{-1}),
\end{align*}
where the first step uses the fact that $\Ldiv$ is invariant under congruent transformations, and the second step uses the scale-invariance of the program~\eqref{eq-log-det-div-min}. With a sample covariance $S$ based on Gaussian observations, the loss $\Ldiv(\estprec, \trueprec)$ has the same distribution for covariance matrices of the form $\{D^{-1}\truecov D^{-1}\}_{D\in \cd}$. In particular, taking $D = D_{\truecov}^{1/2}$, we may assume without loss of generality that $\truecov$ is {\sl normalized}; i.e., $\truecov$ has unit diagonal or equivalently $\truecov$ equals the population correlation matrix $\truecor$. 

Let $f(\aprec) = \<\aprec, S\> - \log|\aprec|$. Since the estimator solves the constrained convex optimization problem $\estprec = \arg\min_{\aprec\in \M}f(\aprec)$, it is characterized by $\langle \nabla f(\estprec),\aprec - \estprec\rangle \ge 0$, for all $\aprec\in \M,$ where $\nabla f(\aprec) = S-\aprec^{-1}$. Hence
\[
\<S-\estcov, \trueprec - \estprec\> \ge 0.
\]
%Similarly from the first-order conditions for $\aprec^\bullet$, \[\<\truecov - \acov^\bullet, \estprec-\aprec^\bullet\> \ge 0.\]
Rearranging yields the basic inequality
\begin{align*}
\Ldiv(\estprec, \trueprec) \le \frac{1}{2p}\left\langle S-\truecov, \trueprec-\estprec\right\rangle.
\end{align*}
Let $A := \|S-\truecov\|_\infty$. Using H\"older's inequality, we have:
\begin{align*}
\Ldiv(\estprec, \trueprec) 
&\le \frac{A}{2p}\left\| \trueprec-\estprec\right\|_1.
\end{align*}
Now applying the triangle inequality and Lemma~\ref{lem-m-matrix-dd} to the element-wise $\ell_1$-norm,
\begin{align*}
\Ldiv(\estprec, \trueprec) 
&\le \frac{A}{p}\left(\text{tr}(\trueprec)+\text{tr}(\estprec)\right).
\end{align*}
%Due to the scale invariance of the symmetrized Stein loss and the Bregman projection, we may assume without loss of generality that $\acov^\bullet$ is normalized; i.e., it has unit diagonal, or equivalently, $\acov^\bullet = \Gamma^\bullet$, the corresponding correlation matrix.
Since we have assumed without loss of generality that $\truecov=\truecor$,
\begin{align*}
\text{tr}(\estprec\truecov)
&= \text{tr}(\estprec) + \sum_{j\ne k} \estprec_{jk}\truecor_{jk}
\ge \left(1-\max_{j\ne k}\truecor_{jk}\right)\text{tr}(\estprec) \\
p=\text{tr}(\trueprec\truecov)
&= \text{tr}(\trueprec) + \sum_{j\ne k} \trueprec_{jk}\truecor_{jk}
\ge \left(1-\max_{j\ne k}\truecor_{jk}\right)\text{tr}(\trueprec),
\end{align*}
where we have again used Lemma~\ref{lem-m-matrix-dd}, along with the facts that $\estprec_{jk}$ and $\trueprec_{jk}$ are nonpositive for $j\ne k$ and $\truecov\ge 0$ entry-wise \citep[see][Chap. 6, Property $N_{38}$]{berman1994nonnegative}. Combining the last three displays and using the characterization of $\gamma(\truecov)$ in \eqref{eq-gamma-def}, we get
\begin{align*}
\Ldiv(\estprec, \trueprec) 
&\le \frac{\gamma(\truecov) A}{p}\left(p+\text{tr}(\estprec\truecov)\right) \\ 
&\le \gamma(\truecov) A\left(3+2\Ldiv(\estprec, \trueprec)\right).
\end{align*}
On the event $E=\left\{2\gamma(\truecov) A \le \frac{1}{2}\right\}$, we have $\Ldiv(\estprec, \trueprec) \le 6\gamma(\truecov) A$. Applying Lemma~\ref{lem-maximal-inequality} with $t=4$, the event $E' = \left\{A\le 2\sqrt{\frac{8\log p}{n}} + \frac{8\log p}{n}\right\}$ occurs with probability at least $1-2/p^2$. 

To guarantee $E'\subset E$, we require
\[
2\sqrt{\frac{8\log p}{n}} + \frac{8\log p}{n}
\le \frac{1}{4\gamma(\truecov)},
\]
which is equivalent to 
\[
\frac{\log p}{n}
%\le \frac{1+8\gamma - 4\gamma\sqrt{4+\gamma^{-1}}}{4\gamma}
\le 2+\frac{1}{4\gamma(\truecov)} -  \sqrt{4+\gamma^{-1}(\truecov)}.
\]
Using $\gamma(\truecov) \ge 1$, it is straightforward to check that the right hand side above is at least $\frac{1}{72\gamma^2(\truecov)}$. Hence, as long as $n\ge 72\gamma^2(\truecov)\log p$, 
\[
\Ldiv(\estprec, \trueprec)
\le 6\gamma(\truecov)\left(2\sqrt{\frac{8\log p}{n}} + \frac{8\log p}{n}\right)
\]
with probability at least $1-2/p^2$. Since $\gamma(\truecov)\ge 1$, the $\sqrt{\frac{8\log p}{n}}$ dominates the $\frac{8\log p}{n}$ term. In particular, we have $\sqrt{\frac{8\log p}{n}} \le \frac{1}{3}$, so $\Ldiv(\estprec, \trueprec) \le 28\gamma(\truecov)\sqrt{\frac{2\log p}{n}}$ with probability at least $1-2/p^2$.
\end{proof}

%{\color{red} \textbf{Can an explicit value be given for $c_1$ here? The upper bound on $A$ has both $\sqrt{\frac{\log p}{n}}$ and $\frac{\log p}{n}$. How are you ignoring the $(\log p)/n$ term (using $\gamma(\truecov) \geq 1)$? Presumably $c_1, c_2$ and $c_3$ have simple expressions here. Why not just replace them with their explicit values in the statement of Theorem 1?}}

\begin{proof}[Proof of Theorem~\ref{thm-misspecification}] Since the attractive part $\aprec^\bullet$ is an $M$-matrix, from the first order optimality conditions for $\estprec$,
\[
\<S-\estcov, \aprec^\bullet - \estprec\>
\ge 0.
\]
Using the first order optimality conditions for $\aprec^\bullet$ and the fact that $\estprec\in \M$, 
\[
\<\truecov-\acov^\bullet, \estprec - \aprec^\bullet\>
\ge 0.
\]
Adding these and rearranging yields the basic inequality
\[
\Ldiv(\aprec^\bullet, \estprec)
\le \frac{1}{2p}\<S-\truecov, \aprec^\bullet - \estprec\>.
\]
The remainder of the proof proceeds as the proof of Theorem~\ref{thm-symm-kl}, substituting $\trueprec$ with $\aprec^\bullet$.
\end{proof}

\subsection{Proof of Theorem~\ref{thm-minimax-rate}}\label{ssec-minimax-proofs}

\begin{proof}[Proof of Theorem~\ref{thm-minimax-rate}] As in \citet[Proof of Theorem 4.1]{cai2016estimating}, we consider precision matrices of the form 
\begin{align}\label{eq-cai-construction}
\aprec=\left[
\begin{array}{cc}
I_{\lceil p/2\rceil} & {\eps A} \\ 
\eps A^\TT & {I_{\lfloor p/2\rfloor}}
\end{array}\right],
\end{align}
where $A$ is a sparse binary matrix with $k$ nonzero entries per row and at most $2k$ nonzero entries per column, for some positive integer $k$ and some $\eps$ to be chosen later. As long as $\eps < 0$ and $2k|\eps| < 1$, the matrix $\Theta$ is a diagonally dominant $M$-matrix. Its inverse is given by the Neumann series
\begin{align*}
\acov = \aprec^{-1} 
&=\sum_{m=0}^\infty (-\eps)^m\left[
\begin{array}{cc}
0 & A \\ 
A^\TT & 0
\end{array}\right]^m  \\
&=\sum_{m=0}^\infty \eps^{2m}\left[
\begin{array}{cc}
(AA^\TT)^m & -\eps A(A^\TT A)^m \\ 
-\eps A^\TT(AA^\TT)^m & (A^\TT A)^m
\end{array}\right]
\end{align*}
From the last display, it is clear that $D_\acov \ge I_p$, so $\max_{j\ne k} \acor_{jk} \le \max_{j\ne k} \acov_{jk}$ where $\acor$ is the correlation matrix corresponding to $\acov$. Furthermore, by triangle-inequality, the largest off-diagonal entry of the top left diagonal block is at most 
\begin{align*}
\left\|\sum_{m=0}^\infty\eps^{2m}(AA^\TT)^m\right\|_{\infty, \text{off}}
&\le \sum_{m=1}^\infty \eps^{2m}\left\|(AA^\TT)^m\right\|_{\infty, \text{off}},
\end{align*}
where we use that the first term $m=0$ has zero off-diagonal. This yields
\[
\left\|(AA^\TT)^m\right\|_{\infty, \text{off}} \le \vertiii{(AA^\TT)^m}_{2}
\le (2k)^{2m}.
\]
By similar bounds on the other blocks of $\acov$, it can be shown that
\[
\max_{j\ne k} \acor_{jk} 
\le \max_{j\ne k} \acov_{jk}
\le \frac{2k|\eps|}{1-(2k\eps)^2}.
\]
A simple sufficient condition to guarantee $\gamma(\acov)\le \gamma$ is thus $4k|\eps|\le (1-\gamma^{-1})\land \frac{1}{2}.$

By the Ger\u{s}gorin circle theorem, the spectrum of $\Theta$ lies in the range $[0,2]$. Further constraining the supremum in~\eqref{eq-minimax} to $\lambda_{\max}(\aprec)\le 2$, by \citet[Eq.~(54]{cai2012optimal}), we have:
\[
\inf_{\breve\aprec}\sup_{\substack{\aprec\in \M \\ \gamma(\acov)\le \gamma}} \E\Ldiv(\breve\aprec, \aprec)
\ge \frac{1}{4}\inf_{\breve\aprec}\sup_{\substack{\aprec\in \M \\ \gamma(\acov)\le \gamma \\ \lambda_{\max}(\aprec)\le 2}}\EE\frac{\|\breve\aprec-\aprec\|_F^2}{p},
\]
so it suffices to lower bound the minimax rate in the Frobenius norm.

Now let $\ca$ denote the set of all $\lceil p/2\rceil \times \lfloor p/2\rfloor$ binary matrices with $k$ nonzero entries per row and at most $2k$ nonzero entries per column, and $\cb = \{0,1\}^{\lceil p/2\rceil}$. Finally, let $e$ denote a vector of ones of length $\lfloor p/2\rfloor$. Given $A\in \ca$ and $b\in \cb$, the matrix $(b\otimes e)\circ A$ has the same shape as $A$, where the $j^\text{th}$ row is nonzero if and only if $b_j=1$. Let 
\[
\cf = \left\{\aprec_{A,b}=\left[
\begin{array}{cc}
I_{\lceil p/2\rceil} & {\eps (b\otimes e)\circ A} \\ 
\eps (b^\TT\otimes e^\TT)\circ A^\TT & {I_{\lfloor p/2\rfloor}}
\end{array}\right] :
A\in \ca, b\in \cb
\right\}.
\]
As we have shown, $\cf\subset\{\aprec\in \M : \gamma(\acov)\le \gamma, \lambda_{\max}(\aprec)\le 2\}$. By \cite[][Lemma~3]{cai2012optimal}
\[
\inf_{\breve\aprec}\max_{\aprec\in \cf}\EE\frac{\|\breve\aprec-\aprec\|_F^2}{p}
\ge \frac{1}{32}
\left[\min_{\substack{A,A'\in\ca, b,b'\in \cb \\ b\ne b'}}\frac{\|\aprec_{A,b} - \aprec_{A',b'}\|_F^2}{H(b,b')}\right]
\left[\min_{1\le j\le \lceil p/2\rceil}\|\bar{P}_{j, 0}\land \bar{P}_{j, 1}\|\right],
\]
where $H$ denotes the Hamming distance and $\|\bar{P}_{j, 0}\land \bar{P}_{j, 1}\|$ denotes the total variation affinity between the measures $\bar{P}_{j, 0}$ and $\bar{P}_{j, 1}$, where $\bar{P}_{j, i}$ is the uniform mixture over $\cn(0, \aprec_{A,b}^{-1})$ over all $A\in \ca$ and all $b\in \cb$ such that $b_j=i$.

For the first term, fix $A,A'$ and $b\ne b'$. For $j$ such that $b_j\ne b_j'$, if say $b_j=0$, the $j^\text{th}$ row of $(b\otimes e)\circ A$ is zero and the $j^\text{th}$ row of $(b'\otimes e)\circ A'$ has $k$ nonzero entries. Hence 
\begin{align*}
    \min_{\substack{A,A'\in\ca, b,b'\in \cb \\ b\ne b'}}\frac{\|\aprec_{A,b} - \aprec_{A',b'}\|_F^2}{H(b,b')}
    &\ge \min_{\substack{A,A'\in\ca, b,b'\in \cb \\ b\ne b'}}\frac{2\sum_{j : b_j\ne b_j'}k\eps^2}{H(b,b')} = 2k\eps^2.
\end{align*}
In particular, we have shown 
\begin{align*}
    \inf_{\breve\aprec}\sup_{\substack{\aprec\in \M \\ \gamma(\acov)\le \gamma}} \E\Ldiv(\breve\aprec, \aprec)
    \ge ck\eps^2 \min_{1\le j\le \lceil p/2\rceil}\|\bar{P}_{j, 0}\land \bar{P}_{j, 1}\|.
\end{align*}

Finally, the same argument of \citep[][proof of Lemma 4.5]{cai2016estimating} with $\eps = c'\sqrt{\frac{\log p}{n}}$ can be used to show $\min_{1\le j\le \lceil p/2\rceil}\|\bar{P}_{j, 0}\land \bar{P}_{j, 1}\| \ge c'' > 0$, yielding
\[
    \inf_{\breve\aprec}\sup_{\substack{\aprec\in \M \\ \gamma(\acov)\le \gamma}} \E\Ldiv(\breve\aprec, \aprec)
    \ge cc''k\eps^2 = c_\gamma\eps. \qedhere
\]
\end{proof}

\subsection{Proof of Proposition \ref{prop-minimaxdiagpsd}}

\begin{proof}[Proof of Proposition \ref{prop-minimaxdiagpsd}] Let $\tilde\Sigma^{\text{DIAG}} = c\cdot D_S$. Since $S_{11} = \frac{1}{n}\sum_{i=1}^n X_{i1}^2$ for $X_{i1}\simiid \cn(0, \truecov_{11})$,
\begin{align*}
\mathbb{E}\left[\Ldiv(\tilde\aprec^{\text{DIAG}}, \trueprec)\right]
&= \frac{1}{2}\mathbb{E}\left[\frac{\truecov_{11}}{cS_{11}} + \frac{cS_{11}}{\truecov_{11}} - 2\right]
= \frac{1}{2}\left[\frac{1}{c}\frac{n}{n-2} + c - 2\right].
%= 2p\left(\sqrt{1 + \frac{2}{n-2}}-1\right),
\end{align*}
The minimum is achieved at $c = \sqrt{\frac{n}{n-2}}$, but taking $c = 1$ suffices to prove the minimax rate~\eqref{eq-minimax-diag} is upper bounded by $\frac{C}{n}$.

Now consider a prior $G$ on $\cd$ over which the components $\trueprec_{jj}$ are i.i.d. Lower bound the minimax risk by the Bayes risk with respect to $G$: 
\begin{align*}
\inf_{\hat\Sigma}\sup_{\truecov\in \cd} \E \Ldiv(\hat\Sigma, \truecov)
&\ge \inf_{\hat\Sigma} \E_G\Ldiv(\hat\Sigma, \truecov) \\
&= \inf_{\hat\Sigma_{11}} \E_G\Ldiv(\hat\Sigma_{11}, \truecov_{11}).
\end{align*}
If $G = [\text{Gamma}(a,b)]^{\otimes n}$, such that $\trueprec_{jj}\simiid \text{Gamma}(a,b)$ under $G$, then combining with the likelihood we have:
\begin{align*}
S_{11}\mid \trueprec_{11} \sim \text{Gamma}(a,b).
\end{align*} 
By conjugacy, the posterior is readily seen to be 
\begin{align*}
\trueprec_{11} \mid S_{11}=s \sim \text{Gamma}\left(a+\frac{n}{2},b+\frac{ns}{2}\right).
\end{align*} 
Thus, for $n > 2$, 
\begin{align*}
\E_G\left[\Ldiv(\frak{d}, \truecov_{11})\mid S_{11} = s\right]
&= \frac{1}{2}\E_G\left[\frak{d}\trueprec_{11} + \frac{\truecov_{11}}{\frak{d}} -2\mid S_{11} = s\right] \\
&= \frac{\frak{d}}{2}\frac{a+n/2}{b+ns/2} + \frac{1}{2\frak{d}}\frac{b+ns/2}{a+n/2-1} -1.
\end{align*}
This is minimized at $\frak{d}^* = \frac{b+ns/2}{\sqrt{(a+n/2)(a+n/2-1)}}$, giving a Bayes risk of 
\begin{align*}
\E_G\left[\Ldiv(\frak{d}^*, \truecov_{11})\right]
&= \sqrt{\frac{a+n/2}{a+n/2-1}}-1.
\end{align*}
Letting $a\downarrow 0$, we find 
\begin{align*}
\inf_{\hat\Sigma}\sup_{\truecov\in \cd} \E \Ldiv(\hat\Sigma, \truecov)
&\ge \sqrt{1 + \frac{2}{n-2}}-1 \\
&= \frac{1}{n-2} + o(n^{-1}),
\end{align*}
as $n\to \infty$. This proves the minimax rate~\eqref{eq-minimax-diag} on $\cd$.

To prove the lower bound~\eqref{eq-minimax-psd} on $\psd$, place an inverse Wishart prior $\truecov \sim \cw^{-1}(\Sigma_0, \nu)$ on the covariance matrix. By conjugacy, 
\begin{align*}
\truecov\mid S \sim \cw^{-1}(\truecov, \Sigma_0 + nS, \nu +n).
\end{align*}
As long as $\nu+n > p+1$, the posterior loss can be written in closed form as
\begin{align*}
\E\left[\Ldiv(\hat\aprec, \trueprec)\mid S\right]
&= \frac{1}{2p}\left[(\nu + n)\text{tr}(\hat\Sigma (\Sigma_0+nS)^{-1}) + \frac{\text{tr}(\hat\aprec(\Sigma_0 + n S))}{\nu + n - p- 1} - 2p\right],
\end{align*}
which is minimized at $\hat\aprec = \sqrt{(\nu+n)(\nu+n-p-1)}(\Sigma_0 + nS)^{-1}$, yielding a Bayes risk of 
\begin{align*}
\E\left[\Ldiv(\hat\aprec, \trueprec)\mid S\right]
&= \sqrt{\frac{\nu+n}{\nu+n-p-1}}-1,
\end{align*}
independent of $\Sigma_0$. Setting $\nu = p+1$,
\begin{align*}
\inf_{\hat\aprec = \hat\aprec(S)}\sup_{\trueprec\succeq 0} \E\Ldiv(\hat\aprec, \trueprec) 
\ge \sqrt{1+\frac{p+1}{n}}-1,
\end{align*}
Finally, use $\sqrt{1+x}-1 \ge (\sqrt{2}-1)\left(x\land \sqrt{x}\right)$ for any $x \ge 0$.
\end{proof}

\subsection{Proof of Theorem \ref{thm-instance-lower-bound}}

\begin{proof}[Proof of Theorem \ref{thm-instance-lower-bound}] 
This proof uses Theorem \ref{thm-inconsistency} which is proved in the next subsection. Since $\truecov\in \cd$, as in the proof of Theorem~\ref{thm-symm-kl} we have
\begin{align*}
\Ldiv(\estprec(S), \trueprec)
&= \Ldiv\left(\estprec(D_{\truecov}^{-1/2}SD_{\truecov}^{-1/2}), I_p\right).
\end{align*}
In particular, due to scale invariance of both the estimator and the loss, the symmetrized Stein loss $\Ldiv(\estprec, \trueprec)$ has the same distribution for all diagonal matrices $\truecov\in \cd$. We thus assume with no loss of generality that $\truecov=I_p$.

Let $f(t) = t+t^{-1}-2$ for $t > 0$. %Note $f'(t) = 1-t^{-2}$ and $f''(t) = 2t^{-3}$, so $f$ is strictly convex. Hence \[f(t) \ge f(1) + f'(1)(t-1) = 0,\] so $f$ is also non-negative. 
By \eqref{eq-goodness-of-fit} and nonnegativity of the function $f$,
\begin{align*}
\Ldiv(\estprec, I_p)
= \frac{1}{p}\sum_{j=1}^p f(\lambda_{j}(\estprec))
\ge \frac{f(\lambda_{\max}(\estprec))}{p}.
%= \lambda_{\min}(\estprec) + \lambda_{\max}(\estcov) - 2.
\end{align*}
%We show in Theorem~\ref{thm-inconsistency}, when $\trueprec = I_p$ and $p^2\gg n$, we have $\lambda_{\max}(\estcov) \ge 1+c_0\frac{p}{\sqrt{n}}$ with probability at least $1-4p\exp\left(-c_1(n\land p)\right)$. 
For $t > 1$, $f'(t) > 0$, so by Theorem~\ref{thm-inconsistency},
\begin{align*}
\Ldiv(\estprec, I_p)
&\ge \frac{1}{p}f\left(1+c_1\frac{p}{\sqrt{n}}\right) 
%&= \frac{1}{p}\left[1+c_1\frac{p}{\sqrt{n}} + \frac{1}{1+c_1\frac{p}{\sqrt{n}}} - 2\right] \\
= \frac{c_1}{\sqrt{n}}\left[1 - \frac{1}{1+c_1\frac{p}{\sqrt{n}}}\right],
\end{align*}
with probability at least $1-3p\exp\left(-c_2(n\land p)\right)$. If $c_1p \ge \sqrt{n}$, this implies $\Ldiv(\estprec, I_p) \ge \frac{c_1}{2\sqrt{n}}$, completing the proof. %\qed
\end{proof}

\subsection{Proof of Theorem~\ref{thm-inconsistency}}

The most technically involved part of the proof is a lower bound on the row sums of the positive part $S_+$ of the sample covariance matrix, which we include as a separate lemma.

\begin{lemma}\label{lem-fixed-i} Under the conditions of Theorem~\ref{thm-inconsistency}, 
\begin{align*}
\sum_{j=1}^p (S_+)_{pj}
\ge 1+c_0 \frac{p}{\sqrt{n}},
\end{align*}
with probability at least $1-3\exp\left(-c_1(n\land p)\right)$, for some universal positive constants $c_0,c_1$.
%\end{align}
%with probability at least $1-4\exp\left(-c_1(\gamma\land 1) n)\right)$, for some universal constant $c_1$.
\end{lemma}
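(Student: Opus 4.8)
The plan is to exploit the fact that, because $\truecov=I_p$, all columns of $X$ are i.i.d.\ $\cn(0,I_n)$, and to condition on the $p$-th column $u:=X_{\cdot p}$. Given $u$, the diagonal term $S_{pp}=\|u\|^2/n$ is fixed, while for $j<p$ we have $S_{pj}=n^{-1}u^\TT X_{\cdot j}$, so that $S_{p1},\dots,S_{p,p-1}$ are conditionally i.i.d.\ $\cn(0,\tau^2)$ with $\tau:=\|u\|/n=\sqrt{S_{pp}/n}$. Writing $\sum_{j=1}^p (S_+)_{pj}=S_{pp}+W$ with $W:=\sum_{j=1}^{p-1}(S_{pj}\lor 0)$, we see that, conditionally on $u$, $W$ is a sum of $p-1$ i.i.d.\ copies of $Z\lor 0$ where $Z\sim\cn(0,\tau^2)$.

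First I would prove a conditional lower bound on $W$ that is uniform in $u$. Since $S_{pj}\lor 0\ge \tfrac{\tau}{2}\ind{\{S_{pj}\ge\tau/2\}}$ and, given $u$, the indicators are i.i.d.\ Bernoulli with success probability $q:=\PP(\cn(0,1)\ge 1/2)>0$ (crucially $q$ does not depend on $\tau$), a multiplicative Chernoff bound for the resulting $\mathrm{Binomial}(p-1,q)$ count shows that $W\ge\kappa(p-1)\tau$ on an event $G_2$ with $\PP(G_2^c\mid u)\le e^{-q(p-1)/8}$, where $\kappa:=q/4$ is a universal constant. Next, standard $\chi^2_n$ tail bounds applied to $S_{pp}$ give $\PP(S_{pp}<1/2)\le e^{-cn}$ (call the complement $G_1$), and, setting $\delta:=\kappa(p-1)/(2\sqrt{2n})$, the Laurent--Massart inequality gives $\PP(S_{pp}<1-\delta)\le e^{-n\delta^2/4}$ (complement $G_3$).

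Then I would combine these on $G_1\cap G_2\cap G_3$. From $G_1$ we get $\tau=\sqrt{S_{pp}/n}\ge 1/\sqrt{2n}$, so on $G_2$, $W\ge\kappa(p-1)\tau\ge\kappa(p-1)/\sqrt{2n}=2\delta$; from $G_3$, $S_{pp}\ge 1-\delta$. Hence $\sum_{j=1}^p(S_+)_{pj}=S_{pp}+W\ge(1-\delta)+2\delta=1+\delta$, and $\delta=\kappa(p-1)/(2\sqrt{2n})\ge c_0\,p/\sqrt n$ with $c_0:=\tfrac{\kappa}{2\sqrt2}\cdot\tfrac{16}{17}$, using $p\ge 17$. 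For the failure probability, a union bound gives at most $e^{-cn}+e^{-n\delta^2/4}+e^{-q(p-1)/8}$; here $n\delta^2/4=\kappa^2(p-1)^2/32\ge\kappa^2(n\wedge p)/32$ (when $n\ge p$ because $(p-1)^2\ge p$ for $p\ge 17$; when $n<p$ because then $p-1\ge n$, so $(p-1)^2\ge n$), while $e^{-cn}\le e^{-c(n\wedge p)}$ and $e^{-q(p-1)/8}\le e^{-q(n\wedge p)/16}$. So each term is at most $e^{-c_1(n\wedge p)}$ for a suitable universal $c_1$, which yields the claim with probability at least $1-3e^{-c_1(n\wedge p)}$.

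The part I expect to be the main obstacle is recovering the additive constant $1$ in the target $1+c_0\,p/\sqrt n$ rather than just $c_0\,p/\sqrt n$: one cannot simply invoke $S_{pp}\ge 1$, since that event has probability only about $1/2$. The resolution above is that the off-diagonal row sum $W$ carries enough slack---it is of order $p\tau/\sqrt n=\Theta(p/\sqrt n)$ on $G_2$, while $S_{pp}$ undershoots $1$ by at most $\delta=\Theta(p/\sqrt n)$ with the required exponentially small failure probability (the $\chi^2$ deviation $\delta$ can be taken this small precisely because $n\delta^2\asymp(p-1)^2\gtrsim n\wedge p$). Balancing these two $\Theta(p/\sqrt n)$ quantities---so that $W$ both supplies the leading $c_0\,p/\sqrt n$ term and absorbs the shortfall $\delta$---produces the $1+c_0\,p/\sqrt n$ bound. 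A secondary technical point is that the conditional concentration of $W$ must be uniform in $u$; the $\mathrm{Binomial}$ reduction handles this automatically because the Bernoulli parameter $q$ is scale-invariant.
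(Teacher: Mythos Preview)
Your proof is correct and follows essentially the same route as the paper: condition on the $p$-th column, split the row sum as $S_{pp}+W$, control $S_{pp}$ via chi-square tail bounds (including the same ``balancing'' choice $\delta\asymp (p-1)/\sqrt{n}$ so that $n\delta^2\asymp (p-1)^2$), and concentrate the conditionally i.i.d.\ sum $W$. The only difference is in the concentration tool for $W$: the paper applies Gaussian Lipschitz concentration to $\frac{1}{p-1}\sum_{j<p}(Z_j)_+$ around its mean $(2\pi)^{-1/2}$, whereas you truncate $(Z_j)_+\ge\tfrac12\ind{\{Z_j\ge 1/2\}}$ and use a Binomial Chernoff bound---a slightly more elementary device that yields the same $e^{-c(p-1)}$ tail.
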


We  give the proof of Theorem \ref{thm-inconsistency} assuming the above lemma and then prove the lemma subsequently. 

\begin{proof}[Proof of Theorem~\ref{thm-inconsistency}] Since $\estcov$ is an inverse $M$-matrix, it is entry-wise nonnegative; i.e., $\estcov\ge 0$. Combining this with the first constraint $\estcov\ge S$ in the dual formulation \eqref{eq-dual}, we have that $\estcov\ge S_+\ge 0$, where $S_+$ is the entry-wise positive part of the sample covariance matrix $S$. The Perron-Frobenius theorem \cite[Corollary 1.5]{berman1994nonnegative} gives 
\begin{align*}
\lambda_{\max}(\estcov)\ge \lambda_{\max}(S_+).
\end{align*}
Thus, we want to show that $\lambda_{\max}(S_+)$ is more severely biased than $\lambda_{\max}(S)$. To this end, we apply another standard result from the spectral theory of nonnegative matrices \cite[Theorem 2.35]{berman1994nonnegative}:
\begin{align*}
\lambda_{\max}(S_+)
\ge \min_k\sum_{j} (S_+)_{jk}.
\end{align*}
By Lemma~\ref{lem-fixed-i}, $\sum_{j} (S_+)_{jk}\ge 1+c_0 \frac{p}{\sqrt{n}}$ with probability at least $1-3e^{-c_1(n\land p)}$ for each fixed $k$, so by a union bound, 
\begin{align*}
\min_k\sum_{j} (S_+)_{jk}
\ge 1+c_0 \frac{p}{\sqrt{n}}
\end{align*}
with probability at least $1-3pe^{-c_1(n\land p)}$. Combining the last three displays gives the desired lower bound on $\lambda_{\max}(\estcov)$.
%Theorem~\ref{thm-inconsistency} follows by first lower bounding the sum on the right-hand side, $\sum_{j} (S_+)_{jk}$, for fixed $k$ with Lemma~\ref{lem-fixed-i} and then taking a union bound. ({\color{red} \textbf{Slightly too terse. May be write down the bound obtained from Lemma \ref{lem-fixed-i} and then write out the union bound argument?}}) 
\end{proof}

We now prove the key lemma on the row sums of $S_+$.

\begin{proof}[Proof of Lemma~\ref{lem-fixed-i}] 
For $u > 0$, write
  \begin{align*}
    \PP \left\{\sum_{j=1}^p (S_+)_{pj} \le 1+u  \right\} &\leq \PP \left\{S_{pp} \leq 1 - u \right\} + \PP \left\{\sum_{j < p} (S_{pj})_+ \le 2u \right\}. %\nonumber 
%    &\leq \exp \left(\frac{-nu^2}{4} \right) + \PP \left\{\sum_{j < p} (S_{pj})_+ \le 2u \right\}. \label{coml}
  \end{align*}
  To bound the first term, note that $n S_{pp} \sim \chi^2_n$ and the following standard chi-squared lower tail bound (see e.g., \citet[inequality (4.4)]{laurent2000adaptive}):
  \begin{equation}\label{eq-chisquare-tail}
      \PP \left\{\frac{\chi^2_n}{n} \leq 1 - u \right\} \leq \exp \left(\frac{-nu^2}{4} \right)  
  \end{equation}
  gives 
  \begin{align}\label{coml}
  \PP \left\{S_{pp} \leq 1 - u \right\}\le \exp \left(-\frac{nu^2}{4} \right). 
  \end{align}
  To bound the second term, notice that conditionally on $X_{ip}, i=1,\dots, n$,
  \begin{align*}
    S_{pj}, j = 1, \dots, p-1 \bigg| X_{ip}, i = 1, \dots, n \overset{\text{i.i.d}}{\sim} N\left(0, \frac{1}{n^2} \sum_{i=1}^n X_{ip}^2 \right). 
  \end{align*}
  Thus, conditionally on $X_{ip}, i = 1, \dots, n$, we can write $S_{pj} = AZ_j$ for $j = 1, \dots, p-1$ where
  \begin{align*}
    A^2 := \frac{1}{n^2} \sum_{i=1}^n X_{ip}^2 ~~ \text{ and } ~~ Z_1, \dots, Z_{p-1} \overset{\text{i.i.d}}{\sim} N(0, 1). 
  \end{align*}
  We can therefore write (using the notation $\PP^|$ for probability conditioned on $X_{ip}, i = 1, \dots, n$)
  \begin{align*}
    \PP^| \left\{\sum_{j < p} (S_{pj})_+ \le 2u \right\} 
    &=  \PP^| \left\{ \sum_{j < p} (Z_j)_+ \leq \frac{2u}{A} \right\} \\
    &= \PP^| \left\{ \frac{1}{p-1}\sum_{j < p}\left( (Z_j)_+ - c \right) \leq \frac{2u}{(p-1)A} - c\right\},
  \end{align*}
  where $c := \E (Z_1)_+ = (2\pi)^{-1/2}$  is a universal constant. We now note that
  \begin{align*}
    (z_1, \dots, z_{p-1}) \mapsto \frac{1}{p-1}\sum_{j < p}(z_j)_+
  \end{align*}
  is a Lipschitz function with Lipschitz constant $(p-1)^{-1/2}$. Thus by the usual concentration inequality for Lipschitz functions of Gaussian random vectors~\citep[see, e.g.,][Theorem 2.26]{wainwright2019high}, we obtain
  \begin{align*}
    \PP^| \left\{ \frac{1}{p-1}\sum_{j < p}\left( (Z_j)_+ - c \right) \leq \frac{2u}{(p-1)A} - c\right\} \leq \exp \left(-\frac{(p-1)}{2} \left(c - \frac{2u}{(p-1)A} \right)^2\right),
  \end{align*}
  assuming that $c > 2u/(A(p-1))$. In particular, for $c > 4u/(A(p-1))$, we get
  \begin{align*}
    \PP^| \left\{ \frac{1}{p-1}\sum_{j < p}\left( (Z_j)_+ - c \right) \leq \frac{2u}{(p-1)A} - c\right\} \leq \exp \left(-\frac{(p-1)c^2}{8} \right).
  \end{align*}
  We have thus proved
  \begin{align*}
    \PP^| \left\{\sum_{j < p} (S_{pj})_+ \le 2u \right\} \leq \exp \left(-\frac{(p-1)c^2}{8} \right) + I \left\{c \leq 4u/(A(p-1)) \right\}. 
  \end{align*}
  Taking an expectations on both sides of this expression, we obtain
  \begin{align*}
    \PP \left\{\sum_{j < p} (S_{pj})_+ \le 2u \right\} \leq \exp \left(-\frac{(p-1)c^2}{8} \right) + \PP \left\{A \leq \frac{4u}{(p-1) c} \right\}.
  \end{align*}
  Note now that $n^2 A^2 \sim \chi^2_n$ and thus
  \begin{align*}
    \PP \left\{A \leq \frac{4u}{(p-1) c} \right\} = \PP \left\{\frac{\chi^2_n}{n} - 1 \leq \frac{16u^2n}{(p-1)^2 c^2} - 1 \right\}. 
  \end{align*}
  We now make the choice $u = \frac{(p-1) c}{4\sqrt{2} \sqrt{n}}$,
  which gives (via \eqref{eq-chisquare-tail})
  \begin{align*}
    \PP \left\{A \leq \frac{4u}{(p-1) c} \right\} = \PP \left\{\frac{\chi^2_n}{n} - 1 \leq \frac{-1}{2} \right\} \leq \exp \left(-\frac{n}{16} \right). 
  \end{align*}
  We have thus proved
  \begin{align*}
     \PP \left\{\sum_{j < p} (S_{pj})_+ \le \frac{(p-1) c}{2\sqrt{2} \sqrt{n}}     \right\} \leq \exp \left(-\frac{(p-1)c^2}{8} \right) + \exp \left(-\frac{n}{16} \right). 
  \end{align*}
  Combining this with \eqref{coml} and using $c= (2\pi)^{-1/2}$, we obtain
  \[
%\PP \left\{\sum_{j=1}^p (S_+)_{pj} \le 1+\frac{(p-1) c}{4\sqrt{2} \sqrt{n}}   \right\} 
%\leq     \exp \left(-\frac{(p-1)^2c^2}{128} \right) + \exp \left(-\frac{(p-1)c^2}{8} \right) + \exp \left(-\frac{n}{16} \right). 
\PP \left\{\sum_{j=1}^p (S_+)_{pj} \le 1+\frac{(p-1)}{8 \sqrt{\pi n}}   \right\} 
\leq     \exp \left(-\frac{(p-1)^2}{256\pi} \right) + \exp \left(-\frac{p-1}{16\pi} \right) + \exp \left(-\frac{n}{16} \right).
  \]
For $p  \ge 17$ the first term is of lower order; i.e., $\exp \left(-\frac{(p-1)^2}{256\pi} \right) \le \exp \left(-\frac{p-1}{16\pi} \right)$.
\end{proof}

\section*{Acknowledgements}

We would like to thank Martin Wainwright, Peter Bickel, and Eli Ben-Michael for valuable discussion.

%%%%%%%%%%%%%%%%%%%%%%%%%%%%%%%%%%%%%%%%%%%%%%
%% Support information (funding), if any,   %%
%% should be provided in the                %%
%% Acknowledgements section.                %%
%%%%%%%%%%%%%%%%%%%%%%%%%%%%%%%%%%%%%%%%%%%%%%
%\section*{Acknowledgements}
%This work was supported in part by the Mathematical Data Science program of the Office of Naval Research under grant number N00014-18-1-2764.

%%%%%%%%%%%%%%%%%%%%%%%%%%%%%%%%%%%%%%%%%%%%%%%%%%%%%%%%%%%%%
%%                  The Bibliography                       %%
%%                                                         %%
%%  imsart-???.bst  will be used to                        %%
%%  create a .BBL file for submission.                     %%
%%                                                         %%
%%  Note that the displayed Bibliography will not          %%
%%  necessarily be rendered by Latex exactly as specified  %%
%%  in the online Instructions for Authors.                %%
%%                                                         %%
%%  MR numbers will be added by VTeX.                      %%
%%                                                         %%
%%  Use \cite{...} to cite references in text.             %%
%%                                                         %%
%%%%%%%%%%%%%%%%%%%%%%%%%%%%%%%%%%%%%%%%%%%%%%%%%%%%%%%%%%%%%

%% if your bibliography is in bibtex format, uncomment commands:
%\bibliographystyle{imsart-number} % Style BST file (imsart-number.bst or imsart-nameyear.bst)

\nocite{mardiamultivariate}
\bibliographystyle{imsart-nameyear}
\bibliography{citations}       % Bibliography file (usually '*.bib')

\end{document}